\documentclass[11pt]{amsart}

\usepackage{amssymb}
\usepackage{graphicx}  

\setlength{\textwidth}{16cm} \setlength{\textheight}{21.5cm}
\setlength{\oddsidemargin}{0.0cm}
\setlength{\evensidemargin}{0.0cm}

\def\R{{\mathbb {R}}}

\def\F{{\mathcal{F}}}
\def\J{{\mathcal{J}}}

\def\PP{{\mathcal{P}}}

\def\sop{\operatorname {\mathrm{spt}}}
\def\dist{\operatorname {\mathrm{dist}}}

\def\div{\operatorname {\mathrm{div}}}

\newtheorem{teo}{Theorem}[section]
\newtheorem{lema}[teo]{Lemma}
\newtheorem{prop}[teo]{Proposition}

\theoremstyle{remark}
\newtheorem{remark}[teo]{Remark}

\theoremstyle{definition}

\numberwithin{equation}{section}

\parskip 3pt
\begin{document}

\title[Shape derivative]{A shape-derivative approach to some PDE model in image restoration}

\author[C. Baroncini, J. Fern\'andez Bonder]{Carla Baroncini and Juli\'an Fern\'andez Bonder}

\address{IMAS - CONICET and Departamento de Matem\'atica, FCEyN - Universidad de Buenos Aires, Ciudad Universitaria, Pabell\'on I  (1428) Buenos Aires, Argentina.}

\email[J. Fernandez Bonder]{jfbonder@dm.uba.ar}

\urladdr[J. Fernandez Bonder]{http://mate.dm.uba.ar/~jfbonder}

\email[C. Baroncini]{cbaroncin@dm.uba.ar}

\subjclass[2010]{49Q10,49J45}

\keywords{Shape optimization, sensitivity analysis, nonstandard growth}

\begin{abstract}
In this paper we analyze the shape derivative of a cost functional appearing in image restoration. The main feature of this cost functional is the appearance of a variable exponent. 
\end{abstract}

\maketitle

\section{Introduction}

Shape derivative (or Hadamard derivative) has been proved to be a valuable tool in order to study shape optimization problems. The main ideas go back to Hadamard's original paper \cite{Hadamard} and has been further developed since. See for instance the books \cite{Banichuk, Pironneau, Sokolowski-Zolesio}.

In this paper we are devoted to the analysis of shape derivative of certain functionals arising in image restoration, whose main feature is that it involves a variable exponent.

Let us begin by discussing the model where these functionals appear.

The goal in image restoration is to obtain an image which is modeled by a function $u\colon \Omega \rightarrow \R$, where $\Omega=(0,1)\times(0,1)\subset \R^2$, given that one has a distorted image $I\colon \Omega \rightarrow \R$.

It is customary to assume that the introduced error, $e=u-I$, is small and the objective is to recover $u$ from $I$ without making any further assumptions on the error $e$. 

A classical PDE model introduced by Chambolle and Lions in \cite{Chambolle-Lions} in 1997, propose to obtain $u$ as a minimizer of the functional
$$
\min \frac{1}{2\beta} \left(\int_{\{|\nabla v|\le \beta\}} |\nabla v|^2\, dx + \int_{\{|\nabla v|>\beta\}} |\nabla v|\, dx\right) + \frac{\beta}{2} \int_{\Omega} (v-I)^2\, dx,
$$
where $\beta>0$ is a parameter that needs to be adjusted by the operator of the method for each image.
The idea behind this method is that the real image must be smooth in regions where there are no boundaries (which are interpreted as regions where the derivatives are not big) and, in the ones which contains boundaries, the solution must admit discontinuities.
This method can be re-written as follows
$$
\min \frac{1}{2\beta} \int_\Omega |\nabla v|^{p(|\nabla v|)}\, dx + \frac{\beta}{2}\int_\Omega (v-I)^2\, dx,
$$
where the exponent $p$ is defined as
$$
p(t) = \begin{cases}
2 & \text{if } t\le \beta\\
1 & \text{if } t>\beta.
\end{cases}
$$
This method is extremely difficult to study rigorously since the space where the functional is defined is not a good functional space. That is why, in 2006, Chen, Levine and Rao introduced in \cite{CLR} a modification by which the exponent $p$ is computed from $I$ but it is fixed. In this second model,
$$
p(x) = 1 + \frac{1}{1+k|\nabla G_\sigma * I|^2},
$$
where $G_\sigma(x) = \frac{1}{\sigma} \exp(-|x|^2/4\sigma^2)$ is the Gaussian filter, with $k, \sigma>0$ parameters. Therefore, $p\sim 1$ where $I$ is discontinuous and $p\sim 2$ where $I$ is smooth.

Then, the problem to be minimized is
$$
\min \frac{1}{2\beta} \int_\Omega |\nabla v|^{p(x)}\, dx + \frac{\beta}{2}\int_\Omega (v-I)^2\, dx.
$$
By considering a fixed regular exponent, the authors can use the Sobolev and Lebesgue spaces with variable exponent, thoroughly studied since the sixties. See \cite{Diening}.

Here we consider a variant of these methods, that can be thought of being in between these two,  that approximates the one created by Chambolle and Lions preserving the good functional properties given by the one presented by Chen, Levine and Rao.

We start by dividing the region $\Omega$ into two sub regions $D_1$ and $D_2$ such that for $i=1,2$, 
\begin{equation}\label{prop.particion}
D_i \subset \Omega \text{ is open},\ \mathring{\overline{D_i}} = D_i,\  D_1\cap D_2 = \emptyset,\  \text{and}\ \overline{\Omega} = \overline{D_1}\cup \overline{D_2}.
\end{equation}

By this partition, we make sure that $D_1$ contains the regions with boundaries of the image and $D_2$ its complement. One way of creating this partition is the following:
$$
D_1 = \{x\in\Omega \colon |\nabla G_\sigma * I|> \beta\},\quad D_2 =\{x\in \Omega \colon |\nabla G_\sigma * I|<\beta\}.
$$

We define an exponent  $p\colon \Omega\to \R$ given by
$$
p(x) = \begin{cases}
1+\epsilon & \text{if } x\in D_1\\
2 & \text{if } x\in D_2.
\end{cases}
$$
Then we compute $u$ by minimizing the functional
$$
J(v) = \frac{1}{2\beta}\int_\Omega |\nabla v|^{p(x)}\, dx + \frac{\beta}{2} \int_\Omega (v-I)^2\, dx.
$$
In order to improve the image found, we then may apply an iterative \textit{steepest descent type method} by following the shape derivative of the functional. 

So the main objective of this paper is to compute this shape derivative.

Let us recall that  a related minimization problem was studied in \cite{Acerbi-Fusco}. In that article it is shown that minimizers are H\"older-continuous across the interfase.

Hence we are left with the problem of computing the shape derivative of $J(u)$ with respect to $D_i$, which we describe now.
Given $V\colon \R^N \rightarrow \R^N$ a Lipschitz deformation field, the associated flow $\{\Phi_t\}_{t\in\R}$ is defined by 
\begin{equation}\label{flux}
\begin{cases}
\frac{d}{dt}\Phi_{t}(x) = V(\Phi_{t}(x)),& t\in\R,\ x\in \R^N\\
\Phi_{0}(x) = x & x\in\R^N.
\end{cases}
\end{equation}
Let us observe that $\Phi_t\colon \R^N \to \R^N$ is a group of diffeomorfisms. That is, $\Phi_t\circ \Phi_s = \Phi_{t+s}$ and $\Phi_t^{-1} = \Phi_{-t}$.

We will assume that $\sop(V)\subset \Omega$, so that $\Phi_t(\Omega) = \Omega$ for every $t\in \R$.

Then, the regions $D_i$ are deformed by $\Phi_{t}$ and we obtain a family of partitions $D_i^t = \Phi_t(D_i),\quad i=1,2$ that verify \eqref{prop.particion} and we define
$$
p_t(x) = \begin{cases}
1+\epsilon & \text{if } x\in D_1^t\\
2 & \text{if } x\in D_2^t.
\end{cases}
$$
Observe that $p_t = p\circ \Phi_{-t}$.

Then, for each $t\in\R$ we define the following functional
$$
J_t(v) = \frac{1}{2\beta}\int_\Omega |\nabla v|^{p_t(x)}\, dx + \frac{\beta}{2} \int_\Omega (v-I)^2\, dx,
$$
Let $u_t$ be the minimizer of $J_t$. We can consider the function $j\colon \R\to\R$ given by $j(t) = J_t(u_t)$.

The shape derivative consists then in computing $j'(0)$.

Then, by finding a good expression for such derivative, it will be possible to compute the deformations field $V$ which makes it as negative as possible and so choose the optimal deformation field to then iterate 
$$
D_i^{\Delta t} \simeq (id + \Delta t V)(D_i).
$$

\section{Preliminaries}

Because of the nature of our problem, which deals with piecewise constant exponents, we are unable to assume any regularity on the variable exponent $p$. Therefore, since most of the known results for variable exponent Sobolev spaces assume that the exponent is at least log-H\"older continuous, we need to review the results that are needed here and prove the missing parts in the case of piecewise constant exponents.

\subsection{Definitions and well-known results}

Given $\Omega\subset \R^N$ a bounded open set, we consider the class of exponents $\PP(\Omega)$ given by
$$
\PP(\Omega) := \{p\colon \Omega\to [1,\infty)\colon p \text{ is measurable and bounded}\}.
$$

The variable exponent Lebesgue space $L^{p(x)}(\Omega)$ is defined by
$$
L^{p(x)}(\Omega):= \Big\{f\in L^1_{\text{loc}}(\Omega)\colon \rho_{p(x)}(f)<\infty\Big\},
$$
where the modular $\rho_{p(x)}$ is given by
$$
\rho_{p(x)}(f) := \int_{\Omega} |f|^{p(x)}\, dx.
$$
This space is endowed with the Luxemburg norm
$$
\|f\|_{L^{p(x)}(\Omega)} = \|f\|_{p(x),\Omega} = \|f\|_{p(x)} := \sup\Big\{\lambda>0\colon \rho_{p(x)}(\tfrac{f}{\lambda})<1\Big\}. 
$$

The infimum and the supremum of the exponent $p$ play an important role in the estimates as the next elementary proposition shows. For further references, the following notation will be imposed
$$
1\le p_-:= \inf_{\Omega}p \le \sup_{\Omega} p =: p_+<\infty.
$$
The proof of the following proposition can be found in \cite[Theorem 1.3, p.p. 427]{FanyZhao}.

\begin{prop}\label{propdesigualdades}
Let $f\in L^{p(x)}(\Omega)$, then
$$
\min\{\|f\|_{p(x)}^{p_-}, \|f\|_{p(x)}^{p_+}\} \leq \rho_{p(x)}(f)\leq \max\{\|f\|_{p(x)}^{p_-}, \|f\|_{p(x)}^{p_{+}}\}.
$$
\end{prop}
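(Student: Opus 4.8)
The plan is to reduce the inequality to the defining property of the Luxemburg norm together with the elementary monotonicity of $t\mapsto s^{t}$ for fixed $s>0$. If $f\equiv 0$ there is nothing to prove, so assume $f\not\equiv 0$ and put $\lambda:=\|f\|_{p(x)}$; since $f\in L^{p(x)}(\Omega)$ this is a finite, strictly positive number.

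The key, and only genuinely non-elementary, step will be to establish the normalization $\rho_{p(x)}(f/\lambda)=1$. I would argue as follows. Set $\varphi(s):=\rho_{p(x)}(f/s)=\int_\Omega s^{-p(x)}|f|^{p(x)}\,dx$ for $s>0$. From $p(x)\in[p_-,p_+]$ and the monotonicity of $t\mapsto s^{-t}$ one gets the pointwise bound $s^{-p(x)}\le\max\{s^{-p_-},s^{-p_+}\}$, hence $\varphi(s)\le\max\{s^{-p_-},s^{-p_+}\}\,\rho_{p(x)}(f)<\infty$ for every $s>0$ (and likewise $\varphi(s)\ge\min\{s^{-p_-},s^{-p_+}\}\,\rho_{p(x)}(f)$). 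On any compact subinterval of $(0,\infty)$ this furnishes an integrable majorant for the integrands $s^{-p(x)}|f|^{p(x)}$ — this is exactly where the boundedness $p_+<\infty$ enters — so dominated convergence shows $\varphi$ is continuous; it is moreover strictly decreasing, with $\varphi(s)\to\infty$ as $s\to0^+$ and $\varphi(s)\to0$ as $s\to\infty$. Since by definition $\lambda=\|f\|_{p(x)}$ is precisely the value separating $\{\varphi<1\}$ from $\{\varphi>1\}$, continuity forces $\varphi(\lambda)=1$.

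With the normalization in hand the rest is immediate. Writing $f=\lambda\,(f/\lambda)$,
$$
\rho_{p(x)}(f)=\int_\Omega \lambda^{p(x)}\Big|\frac{f}{\lambda}\Big|^{p(x)}\,dx,
$$
and, since $p(x)\in[p_-,p_+]$ and $t\mapsto\lambda^{t}$ is monotone (increasing if $\lambda\ge1$, decreasing if $\lambda<1$), one has $\min\{\lambda^{p_-},\lambda^{p_+}\}\le\lambda^{p(x)}\le\max\{\lambda^{p_-},\lambda^{p_+}\}$ pointwise on $\Omega$. Multiplying by $|f/\lambda|^{p(x)}\ge0$, integrating, and using $\rho_{p(x)}(f/\lambda)=1$ yields
$$
\min\{\lambda^{p_-},\lambda^{p_+}\}\le\rho_{p(x)}(f)\le\max\{\lambda^{p_-},\lambda^{p_+}\},
$$
which is the claim since $\lambda=\|f\|_{p(x)}$.

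The main obstacle is the normalization identity $\rho_{p(x)}(f/\|f\|_{p(x)})=1$; everything hinges on the finiteness and continuity of the modular along dilations, for which the hypothesis $p_+<\infty$ is essential. One could of course simply invoke \cite{FanyZhao}, where this proposition is proved, but the argument above is short enough to record directly.
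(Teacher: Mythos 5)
Your proof is correct. Note that the paper itself does not prove this proposition at all---it just cites \cite[Theorem 1.3]{FanyZhao}---and your argument is essentially the standard one from that reference: first the normalization $\rho_{p(x)}(f/\|f\|_{p(x)})=1$, which you correctly identify as the only non-trivial point and which indeed hinges on $p_+<\infty$ to make $\varphi(s)=\rho_{p(x)}(f/s)$ finite, continuous (dominated convergence on compact subintervals of $(0,\infty)$) and strictly decreasing with limits $+\infty$ at $0^+$ and $0$ at $+\infty$; then the pointwise comparison $\min\{\lambda^{p_-},\lambda^{p_+}\}\le\lambda^{p(x)}\le\max\{\lambda^{p_-},\lambda^{p_+}\}$ integrated against $|f/\lambda|^{p(x)}$. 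One small caveat: the Luxemburg norm as literally written in the paper, $\sup\{\lambda>0:\rho_{p(x)}(f/\lambda)<1\}$, is evidently a misprint, since $\{\lambda>0:\varphi(\lambda)<1\}$ is an unbounded interval and its supremum is $+\infty$; your reading of $\|f\|_{p(x)}$ as the value separating $\{\varphi<1\}$ from $\{\varphi>1\}$ (equivalently, the usual infimum definition) is the intended one, and under it your conclusion $\varphi(\|f\|_{p(x)})=1$, and hence the whole proof, is justified exactly as you say.
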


\begin{remark}\label{minmax}
Proposition \ref{propdesigualdades}, is equivalent to
$$
\min\{\rho_{p(x)}(f)^{\frac{1}{p_-}}, \rho_{p(x)}(f)^{\frac{1}{p_{+}}} \} \leq \|f\|_{p(x)}\leq \max\{\rho_{p(x)}(f)^{\frac{1}{p_-}}, \rho_{p(x)}(f)^{\frac{1}{p_{+}}} \}.
$$
\end{remark}

We will use the following form of H\"older's inequality for variable exponents. The proof, which is an easy consequence of Young's inequality, can be found in \cite[Lemma 3.2.20]{Diening}.

\begin{prop}[H\"older's inequality]\label{propholder}
Assume $p_->1$. Let $u\in L^{p(x)}(\Omega)$ and $v\in L^{p'(x)}(\Omega)$, then
$$
\int_{\Omega} |u v|\, dx\leq 2\|u\|_{p(x)} \|v\|_{p'(x)},
$$
where $p'(x)$ is, as usual, the conjugate exponent, i.e. $p'(x):= p(x)/(p(x)-1)$.
\end{prop}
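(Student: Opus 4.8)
The plan is to reduce everything to a pointwise application of the classical Young inequality after normalizing $u$ and $v$ by their Luxemburg norms. First I would dispose of the trivial cases: if $\|u\|_{p(x)}=0$ then $u=0$ a.e.\ and both sides of the asserted inequality vanish, and symmetrically if $\|v\|_{p'(x)}=0$; so I may assume $\lambda:=\|u\|_{p(x)}$ and $\mu:=\|v\|_{p'(x)}$ both lie in $(0,\infty)$. It is worth recording here where the hypothesis $p_->1$ enters: it guarantees $p'(x)=p(x)/(p(x)-1)\le p_-/(p_--1)<\infty$, so that $p'\in\PP(\Omega)$ and $L^{p'(x)}(\Omega)$ is a genuine conjugate space, and moreover it forces $1<p(x)<\infty$ for every $x\in\Omega$.

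The one input I need from the preliminaries is that the modular of a function divided by its norm does not exceed $1$: applying Proposition~\ref{propdesigualdades} to $u/\lambda$, whose norm equals $1$ by homogeneity of the Luxemburg norm, yields $\rho_{p(x)}(u/\lambda)\le1$, and in the same way $\rho_{p'(x)}(v/\mu)\le1$. Then, for a.e.\ $x\in\Omega$, I apply the numerical Young inequality $ab\le a^q/q+b^{q'}/q'$ (valid for $a,b\ge0$ and any $q\in(1,\infty)$ with $q'=q/(q-1)$) with the choice $q=p(x)$, $a=|u(x)|/\lambda$, $b=|v(x)|/\mu$ — legitimate precisely because $1<p(x)<\infty$ everywhere — to obtain
$$
\frac{|u(x)|}{\lambda}\,\frac{|v(x)|}{\mu} \le \frac{1}{p(x)}\Big(\frac{|u(x)|}{\lambda}\Big)^{p(x)}+\frac{1}{p'(x)}\Big(\frac{|v(x)|}{\mu}\Big)^{p'(x)}.
$$

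Integrating over $\Omega$ and bounding $1/p(x)\le1$ and $1/p'(x)\le1$ (since $p(x),p'(x)\ge1$), this gives
$$
\frac{1}{\lambda\mu}\int_\Omega|uv|\,dx \le \rho_{p(x)}(u/\lambda)+\rho_{p'(x)}(v/\mu) \le 2,
$$
and multiplying through by $\lambda\mu=\|u\|_{p(x)}\|v\|_{p'(x)}$ is exactly the claim. There is no genuine obstacle here: the only two points that require care are the bound $\rho_{p(x)}(f/\|f\|_{p(x)})\le1$, which is supplied by Proposition~\ref{propdesigualdades}, and the applicability of Young's inequality at \emph{every} point of $\Omega$, which is what makes the hypothesis $p_->1$ necessary; everything else is a routine integration.
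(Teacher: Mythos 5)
Your proof is correct, and it is precisely the normalization-plus-pointwise-Young argument that the paper itself points to: the paper gives no proof of this proposition, deferring to \cite[Lemma 3.2.20]{Diening}, whose proof runs along the same lines (unit-ball property of the Luxemburg norm from Proposition~\ref{propdesigualdades}, then Young's inequality with exponent $p(x)$, then integration). No gaps to report.
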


The variable exponent Sobolev space $W^{1,p(x)}$ is defined by
$$
W^{1,p(x)}(\Omega):=\Big\{u\in W^{1,1}_{\text{loc}}(\Omega)\colon u\in L^{p(x)}(\Omega) \text{ and } \partial_i u\in L^{p(x)}(\Omega)\ i=1,\dots,N\Big\},
$$
where $\partial_i u$ stands fot the $i-$th partial weak derivative of $u$.

This space posses a natural modular given by
$$
\rho_{1,p(x)}(u) := \int_\Omega |u|^{p(x)} + |\nabla u|^{p(x)}\, dx,
$$
so $u\in W^{1,p(x)}(\Omega)$ if and only if $\rho_{1,p(x)}(u)<\infty$.

The corresponding Luxemburg norm associated to this modular is
$$
\|u\|_{W^{1,p(x)}(\Omega)} = \|u\|_{1,p(x),\Omega} = \|u\|_{1,p(x)} := \sup\Big\{\lambda>0\colon \rho_{1,p(x)}(\tfrac{u}{\lambda})<1\Big\}. 
$$
Observe that this norm turns out to be equivalent to $\|u\|:= \|u\|_{p(x)} + \|\nabla u\|_{p(x)}$.

Now we state and prove a simple proposition that characterizes the Sobolev space when the variable exponent is piecewise constant.

\begin{prop}
Let $\Omega\subset \R^N$ be an open of finite measure and let $D_1, D_2\subset \Omega$ be a partition verifying \eqref{prop.particion}. Let $1\le p_1,p_2<\infty$ and let $p\in \PP(\Omega)$ be such that $p=p_1\chi_{D_1} + p_2\chi_{D_2}$.

Then, $u \in W^{1,p(x)}(\Omega)\Leftrightarrow u \in W^{1,p_1}(D_1),\  u \in W^{1,p_2}(D_2) \text{ and u} \in W^{1,\min \{p_1,p_2\}}(\Omega).$
\end{prop}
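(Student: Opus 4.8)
The plan is to prove both implications by unwinding the definitions of the modulars and using the fact that $D_1$, $D_2$ partition $\Omega$ up to a null set (since $D_1\cap D_2=\emptyset$ and $\overline\Omega=\overline{D_1}\cup\overline{D_2}$, the set $\Omega\setminus(D_1\cup D_2)\subset\partial D_1\cup\partial D_2$, and one should note that $|\Omega\setminus(D_1\cup D_2)|$ need not be zero in general, so the cleaner route is to integrate over $D_1\cup D_2$ and handle the boundary separately — I will address this in the main obstacle below). Write $q=\min\{p_1,p_2\}$. The key elementary observation is that for a measurable $f$ on $\Omega$,
$$
\rho_{p(x),\Omega}(f)=\int_{D_1}|f|^{p_1}\,dx+\int_{D_2}|f|^{p_2}\,dx=\rho_{p_1,D_1}(f)+\rho_{p_2,D_2}(f),
$$
so that $\rho_{1,p(x)}(u)<\infty$ if and only if $u\in W^{1,p_1}(D_1)$ and $u\in W^{1,p_2}(D_2)$ (recalling that $W^{1,p_i}$ coincides with the finiteness of the corresponding modular, and that $u\in W^{1,1}_{\mathrm{loc}}(\Omega)$ must be assumed throughout so that the weak partial derivatives $\partial_i u$ make sense globally).

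For the forward implication, assume $u\in W^{1,p(x)}(\Omega)$. By the displayed identity, $u\in W^{1,p_1}(D_1)$ and $u\in W^{1,p_2}(D_2)$ are immediate. It remains to show $u\in W^{1,q}(\Omega)$. Here I would use that $\Omega$ has finite measure together with the elementary inequality $|z|^q\le 1+|z|^{p_i}$ valid for each $i$ (since $q\le p_i$): on $D_i$ one has $|u|^q+|\nabla u|^q\le 2+|u|^{p_i}+|\nabla u|^{p_i}$, and summing over $i=1,2$ gives
$$
\int_{D_1\cup D_2}|u|^q+|\nabla u|^q\,dx\le 2|\Omega|+\rho_{1,p(x)}(u)<\infty.
$$
Since $|\Omega\setminus(D_1\cup D_2)|=0$ — this is the point requiring justification — we conclude $\rho_{1,q,\Omega}(u)<\infty$, i.e. $u\in W^{1,q}(\Omega)$.

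For the reverse implication, assume $u\in W^{1,p_1}(D_1)$, $u\in W^{1,p_2}(D_2)$, and $u\in W^{1,q}(\Omega)$. The first two hypotheses give, via the displayed identity again, $\rho_{1,p(x)}(u)=\rho_{1,p_1,D_1}(u)+\rho_{1,p_2,D_2}(u)<\infty$; the role of the third hypothesis $u\in W^{1,q}(\Omega)$ is to guarantee that $u$ and its partial derivatives are genuinely defined as elements of $W^{1,1}_{\mathrm{loc}}(\Omega)$ across the interface $\partial D_1\cap\partial D_2$ — membership in $W^{1,p_1}(D_1)$ and $W^{1,p_2}(D_2)$ separately does not by itself produce a global weak derivative, so this hypothesis supplies exactly the missing global Sobolev regularity. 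Combining, $u\in W^{1,1}_{\mathrm{loc}}(\Omega)$ with $\rho_{1,p(x)}(u)<\infty$, hence $u\in W^{1,p(x)}(\Omega)$.

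The main obstacle is the measure-theoretic point that $|\Omega\setminus(D_1\cup D_2)|=0$, i.e. that the interface $\partial D_1\cap\partial D_2$ (more precisely $\overline{D_1}\cap\overline{D_2}$, which contains $\Omega\setminus(D_1\cup D_2)$) is Lebesgue-null. Strictly speaking this does not follow from \eqref{prop.particion} alone — one can cook up partitions with a fat interface — so the honest proof should either (i) observe that all integrals in the modulars are unaffected by the interface provided it is null and note that in the intended application the interface has measure zero, or (ii) restate the result with the convention that the modular $\rho_{p(x)}$ is insensitive to the values of $p$ on a null set and interpret $W^{1,p(x)}(\Omega)$ accordingly. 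I would flag this explicitly and otherwise present the argument as above, since the two modular identities are the real content and everything else is the two-line comparison $|z|^q\le 1+|z|^{p_i}$.
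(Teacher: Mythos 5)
Your proof is correct and follows essentially the same route as the paper: split the modular as $\rho_{p(x),\Omega}(f)=\rho_{p_1,D_1}(f)+\rho_{p_2,D_2}(f)$, which gives the two restricted memberships at once; control the $\min\{p_1,p_2\}$-modular on the finite-measure set $\Omega$ by an elementary comparison (the paper uses H\"older's inequality where you use $|z|^q\le 1+|z|^{p_i}$, a purely cosmetic difference); and in the converse use $u\in W^{1,\min\{p_1,p_2\}}(\Omega)$ exactly as you do, to guarantee that the weak gradient exists globally across the interface before checking finiteness of the modular.

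The one place where you and the paper diverge is the measure of the interface. The paper simply asserts that \eqref{prop.particion} implies $|\Omega\setminus(D_1\cup D_2)|=0$, whereas you flag this as not automatic, and your skepticism is justified: openness, disjointness, regularity ($\mathring{\overline{D_i}}=D_i$) and $\overline{\Omega}=\overline{D_1}\cup\overline{D_2}$ do not by themselves force the interface to be Lebesgue-null. A fat-Cantor-set construction on $(0,1)$ (assign the gaps removed at odd and even generations to $D_1$ and $D_2$ respectively) produces two disjoint regular open sets whose closures cover $[0,1]$ but whose common boundary has positive measure. So the null-interface property is genuinely an extra hypothesis --- harmless in the intended application, where the interface is a curve of measure zero --- and your explicit flagging of it makes your write-up more careful than the paper's at this step; it does not affect the validity of the rest of your argument, which coincides with the paper's.
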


\begin{proof}
Observe that \eqref{prop.particion} implies that $|\Omega \setminus (D_1\cup D_2)|=0$. Then
$$
\int_{\Omega} |u|^{p(x)}\, dx = \int_{D_1} |u|^{p_1}\, dx + \int_{D_2} |u|^{p_2}\, dx.
$$
and
$$
\int_{\Omega} |\nabla u|^{p(x)}\, dx = \int_{D_1} |\nabla u|^{p_1}\, dx + \int_{D_2} |\nabla u|^{p_2}\, dx.
$$

Moreover, assume that $p_1<p_2$ and by H\"older's inequality,
\begin{align*}
\int_{\Omega} |\nabla u|^{p_1}\, dx &=  \int_{D_1} |\nabla u|^{p_1}\, dx + \int_{D_2} |\nabla u|^{p_1}\, dx\\
&\le \int_{D_1} |\nabla u|^{p_1}\, dx + |D_2|^{\frac{p_2-p_1}{p_2}}\left(\int_{D_2} |\nabla u|^{p_2}\, dx\right)^{\frac{p_1}{p_2}}<\infty.
\end{align*}
Analogously, $u\in L^{p_1}(\Omega)$.

For the converse, we just observe that since $u\in W^{1, \min\{p_1,p_2\}}(\Omega)$, then $\nabla u$ is defined in the whole of $\Omega$. Then is easy to see that $\nabla u\in L^{p(x)}(\Omega)$ by the same arguments as before.
\end{proof}

\section{Differentiability.}

Let $V$ be a Lipschitz vector field with support in $\Omega$ and let $\{\Phi_t\}_{t\in\R}$ its associated flux given by  \eqref{flux}.

Let us begin with the following observation:
\begin{remark}\label{asintoticas} 
By Taylor expansion, we have
$$
\Phi_t(x)=x+V(x)t+o(t)
$$
and so we have the following asymptotic formulas hold:
$$
D\Phi_t(x)=Id+tDV(x)+o(t)=Id+O(t),
$$
$$
J\Phi_t(x)=1+t \div V (x)+o(t)=1+O(t),
$$
for all $x \in \R^N$, where $J\Phi_t$ is the Jacobian of $\Phi_t$.
\end{remark}

The following proposition, though elementary, will be useful in the sequel and shows that any diffeomorphism $\Phi\colon \R^N\to\R^N$, induces a bounded linear isomorphism between Sobolev spaces.

\begin{prop}\label{prop.iso}
Let $\Phi \colon \Omega_1 \to \Omega_2$ be a diffeomorphism and $p \in \PP(\Omega_1)$ be a bounded exponent. 

Then, $\Phi$ induces a bounded linear isomorphism
$$
\F \colon W^{1,p}(\Omega_1) \to W^{1,q}(\Omega_2),
$$
where $q \colon \Omega_2 \to [1, +\infty)$ is given by $q(x):=p(\Phi^{-1}(x))$, by the expression
$$
\F(u):=u\circ \Phi^{-1}.
$$
\end{prop}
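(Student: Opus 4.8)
The plan is to verify directly that the pull-back map $\F(u) = u\circ\Phi^{-1}$ takes $W^{1,p}(\Omega_1)$ boundedly into $W^{1,q}(\Omega_2)$, and then observe that the inverse diffeomorphism $\Phi^{-1}$ gives the inverse map, which is bounded by the same argument. The key tool throughout is the change of variables formula together with the chain rule for Sobolev functions composed with diffeomorphisms.

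First I would handle the change of variables at the level of the modulars. For $u\in W^{1,p}(\Omega_1)$ set $v = u\circ\Phi^{-1}$. Writing $y = \Phi(x)$, the standard chain rule for Sobolev functions gives $\nabla v(y) = (D\Phi(x))^{-T}\nabla u(x)$ with $x = \Phi^{-1}(y)$, so that $|\nabla v(y)| \le \|(D\Phi)^{-1}\|_\infty |\nabla u(\Phi^{-1}(y))|$, where the sup-norm of $(D\Phi)^{-1}$ is finite because $\Phi$ is a diffeomorphism. Using the substitution $y = \Phi(x)$, $dy = J\Phi(x)\, dx$, and the fact that $q(\Phi(x)) = p(x)$,
\begin{align*}
\rho_{1,q(x)}(v) &= \int_{\Omega_2} |v(y)|^{q(y)} + |\nabla v(y)|^{q(y)}\, dy\\
&\le \int_{\Omega_1} \big(|u(x)|^{p(x)} + \|(D\Phi)^{-1}\|_\infty^{p(x)}|\nabla u(x)|^{p(x)}\big) J\Phi(x)\, dx\\
&\le C\,\rho_{1,p(x)}(u),
\end{align*}
where $C$ depends on $\|J\Phi\|_\infty$, $\|(D\Phi)^{-1}\|_\infty$, and $p_+$ (to absorb the exponent $p(x)\le p_+$ when $\|(D\Phi)^{-1}\|_\infty$ may exceed $1$). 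This shows $\F$ maps into $W^{1,q}(\Omega_2)$ and is well-defined.

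Next I would upgrade this modular estimate to a norm estimate. From the displayed inequality, $\rho_{1,q(x)}(v/\lambda) \le C\,\rho_{1,p(x)}(u/\lambda)$ for every $\lambda>0$ (the constant $C$ does not depend on $\lambda$ once one is slightly careful: replacing $u$ by $u/\lambda$ only rescales both sides homogeneously in the relevant sense, or one argues via Proposition \ref{propdesigualdades} and Remark \ref{minmax} to pass between modular and norm bounds). Concretely, by Remark \ref{minmax} one gets $\|v\|_{1,q(x)} \le \max\{(C\rho_{1,p(x)}(u))^{1/q_-}, (C\rho_{1,p(x)}(u))^{1/q_+}\}$, and then bounding $\rho_{1,p(x)}(u)$ by a power of $\|u\|_{1,p(x)}$ via Proposition \ref{propdesigualdades} yields $\|\F(u)\|_{1,q(x)} \le C'\|u\|_{1,p(x)}$ with $C'$ independent of $u$; linearity of $\F$ is immediate. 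The same computation applied to $\Phi^{-1}\colon \Omega_2\to\Omega_1$ (which is also a diffeomorphism, with exponent $p = q\circ\Phi$ on $\Omega_1$) shows that $\G(v) := v\circ\Phi$ is a bounded linear map $W^{1,q}(\Omega_2)\to W^{1,p}(\Omega_1)$, and clearly $\G\circ\F = \mathrm{id}$ and $\F\circ\G = \mathrm{id}$, so $\F$ is a bounded linear isomorphism.

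The main obstacle is not any single estimate but the bookkeeping of constants in the variable-exponent setting: because $\|(D\Phi)^{-1}\|_\infty$ need not be $\le 1$, the term $\|(D\Phi)^{-1}\|_\infty^{p(x)}$ must be uniformly bounded using $p_+<\infty$, and the passage from the clean modular inequality to a genuine norm inequality requires invoking Proposition \ref{propdesigualdades}/Remark \ref{minmax} rather than a naive homogeneity argument, since the modular is not homogeneous. A secondary point worth stating carefully is the justification of the chain rule $\nabla(u\circ\Phi^{-1}) = (D\Phi^{-1})^T(\nabla u\circ\Phi^{-1})$ for $W^{1,1}_{\mathrm{loc}}$ functions composed with a bilipschitz (indeed $C^1$) diffeomorphism; this is classical but should be cited or sketched via approximation by smooth functions.
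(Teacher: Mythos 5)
Your argument is correct in substance and follows essentially the same route as the paper: change of variables $y=\Phi(x)$ together with the chain rule, absorbing $\|J\Phi\|_\infty$ and $\|D\Phi^{-1}\|_\infty$ through the bounds $p\le p_+$, $p\ge p_-$, and obtaining the inverse by applying the same computation to $\Phi^{-1}$. The only real difference is the passage from the modular estimate to the norm estimate: the paper works directly with the Luxemburg norm, via the inclusion of the sets of admissible $\lambda$'s (scaling $u$ by $C^{1/p_-}$ when $C:=\|J\Phi\|_\infty>1$) and taking infima, whereas you go through Proposition \ref{propdesigualdades}/Remark \ref{minmax}. Be careful with that last step as written: chaining $\|v\|_{1,q}\le \max\{\rho_{1,q}(v)^{1/q_-},\rho_{1,q}(v)^{1/q_+}\}$, $\rho_{1,q}(v)\le C\rho_{1,p}(u)$ and $\rho_{1,p}(u)\le\max\{\|u\|_{1,p}^{p_-},\|u\|_{1,p}^{p_+}\}$ does \emph{not} literally give a linear bound; for small $\|u\|_{1,p}$ it only yields $\|v\|_{1,q}\le C^{1/p_+}\|u\|_{1,p}^{p_-/p_+}$. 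The fix is standard and already within your reach: since $\F$ is linear and the Luxemburg norm is homogeneous, it suffices to prove the estimate on the unit sphere $\|u\|_{1,p}=1$, where $\rho_{1,p}(u)\le 1$ and hence $\|\F(u)\|_{1,q}\le\max\{C^{1/p_-},C^{1/p_+}\}$; equivalently, use your inequality $\rho_{1,q}(v/\lambda)\le C\,\rho_{1,p}(u/\lambda)$ with the specific choice $\lambda=C^{1/p_-}\|u\|_{1,p}$, which is exactly the paper's set-inclusion trick. With that one-line repair, your proof matches the paper's.
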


\begin{proof}	
We first observe that $\F$ is clearly a linear isomorphism with inverse given by 
$$
\F^{-1} \colon W^{1,q}(\Omega_2) \to W^{1,p}(\Omega_1), \qquad \F^{-1}(v):=v\circ \Phi.
$$
Let us now see that it is also bounded. 

Let us consider $\lambda>0$ and, for simplicity, let us denote $v=\F(u)$. Then, by changing variables $y=\Phi^{-1}(x)$,
\begin{align*}
\int_{\Omega_2}\Big|\frac{v(x)}{\lambda}\Big|^{q(x)}\, dx&=\int_{\Omega_2}\Big|\frac{u(\Phi^{-1}(x))}{\lambda}\Big|^{p(\Phi^{-1}(x))}\, dx\\
&=\int_{\Omega_1}\Big|\frac{u(y)}{\lambda}\Big|^{p(y)}J\Phi(y)\, dy\\
&\leq \|J \Phi\|_{\infty}\int_{\Omega_1}\Big|\frac{u(y)}{\lambda}\Big|^{p(y)}dy
\end{align*}
Let us observe that, if $C:=\|J \Phi\|_{\infty}\leq 1$, clearly we have 
$$
\|u\|_{p,\Omega_1}=\inf \{\lambda>0 \colon \int_{\Omega_1}\Big|\frac{u(y)}{\lambda}\Big|^{p(y)}dy\leq 1\}\geq \inf \{\lambda>0 \colon \int_{\Omega_2}\Big|\frac{v(y)}{\lambda}\Big|^{q(y)}dy\leq 1\}=\|v\|_{q,\Omega_2}
$$
Let us now assume that $C>1$. Then, since
\begin{align*}
\left\{\lambda>0 \colon \int_{\Omega_1}\Big|\frac{C^{\frac{1}{p_-}}u(y)}{\lambda} \Big|^{p(y)}\, dy \leq 1\right\} & \subset \left\{\lambda>0 \colon \int_{\Omega_1}\Big|\frac{u(y)}{\lambda} \Big|^{p(y)}\, dy \leq \frac{1}{C}\right\}\\
&\subset \left\{\lambda>0 \colon \int_{\Omega_2}\Big|\frac{v(x)}{\lambda} \Big|^{q(x)}\, dx\leq 1\right\},
\end{align*}
taking infimum, we conclude that
$$
C^{\frac{1}{p_-}}\|u\|_{p,\Omega_1}=\|C^{\frac{1}{p_-}}u\|_{p,\Omega_1}\geq\inf\{\lambda>0 \colon \int_{\Omega_1}\Big|\frac{u(y)}{\lambda} \Big|^{p(y)}\, dy \leq \frac{1}{C}\}\geq \|v\|_{q,\Omega_2}=\|\F(u)\|_{q, \Omega_2}.
$$
Analogously,
\begin{align*}
\int_{\Omega_2}\Big|\frac{\nabla v(x)}{\lambda}\Big|^{q(x)}\, dx &= \int_{\Omega_2}\Big|\frac{\nabla (u\circ \Phi^{-1})(x)}{\lambda}\Big|^{q(x)}\, dx\\
&= \int_{\Omega_2}\Big|\frac{\nabla u(\Phi^{-1}(x)) D\Phi^{-1}(x)}{\lambda}\Big|^{p(\Phi^{-1}(x))}\, dx\\
&= \int_{\Omega_1}\Big|\frac{\nabla u(y) D\Phi^{-1}(\Phi(y))}{\lambda}\Big|^{p(y)}J \Phi(y)\, dy\\
&\leq \max\{1, \|D \Phi^{-1}\|_{\infty}\}^{p_+} \|J \Phi\|_{\infty}\int_{\Omega_1}\Big|\frac{\nabla u(y)}{\lambda}\Big|^{p(y)}\, dy.
\end{align*}

Therefore, $\|\nabla \F(u)\|_{q, \Omega_2}\leq C \|\nabla u\|_{p, \Omega_1}$, which completes the proof.
\end{proof}

\begin{remark}
In the previous proof, given $A \colon \Omega \to \mathbb{R}^{N\times N}$, we considered the norm $\|A\|_{\infty}:=\displaystyle{\sup_{x\in \Omega}}\|A(x)\|$ and, given $B \in \mathbb{R}^{N\times N}$, we considered the norm $\|B\|:=\displaystyle{\sup_{\xi\neq 0}}\frac{|B\xi|}{|\xi|}$.
\end{remark}

Observe that, since $\sop(V)\subset\subset \Omega$, it follows that $\Phi_t(\Omega)=\Omega$ for every $t\in\R$ and that if $p = p_1\chi_{D_1} + p_2\chi_{D_2}$ then $p_t:=p\circ \Phi_{-t} = p_1 \chi_{D_1^t} + p_2 \chi_{D_2^t}$, where $D_i^t = \Phi_t(D_i)$, $i=1,2$.

Therefore, in view of Proposition \ref{prop.iso}, we have that 
$$
\F_t\colon W^{1,p}(\Omega)\to W^{1,p_t}(\Omega), \qquad u\mapsto u\circ \Phi_{-t}
$$
is a bounded linear isomorphism.

Let us consider the space $X_t:=W^{1,p_t}(\Omega)\cap L^{2}(\Omega)$ equipped with the norm 
$$
\|\cdot\|_{X_t}:=\|\cdot\|_{W^{1,p_t}(\Omega)}+\|\cdot\|_{L^{2}(\Omega)}
$$
and the space $X:=W^{1,p}(\Omega)\cap L^{2}(\Omega)$ equipped with the norm 
$$
\|\cdot\|_X:=\|\cdot\|_{W^{1,p}(\Omega)}+\|\cdot\|_{L^{2}(\Omega)}.
$$
It is clear that $\F_t\colon X\to X_t$ is still a bounded linear isomorphism.
 
Given $f \in L^{2}(\Omega)$, we define the quantity
$$
\tilde s(t):= \inf_{v \in X_t} \int_{\Omega}\frac{|\nabla v|^{p_t}}{p_t}\, dx +\int_{\Omega}\frac{|v-f|^2}{2}\, dx
$$
which is clearly equivalent to 
\begin{equation}\label{st}
s(t):= \inf_{v \in X_t} \int_{\Omega}\frac{|\nabla v|^{p_t}}{p_t}\, dx +\int_{\Omega} \frac{|v|^2}{2} \, dx- \int_{\Omega}v f\, dx.
\end{equation}
In fact, $\tilde s(t) = s(t) + \|f\|_2^2$.

Observe that, since $\F_t$ is an isomorphism, one actually has
$$
s(t) = \inf_{u\in X} \int_{\Omega} \frac{|\nabla (u\circ \Phi_{-t})|^{p_t}}{p_t}\, dx + \int_{\Omega} \frac{|u\circ \Phi_{-t}|^2}{2}\, dx - \int_{\Omega} (u\circ \Phi_{-t}) f\, dx.
$$

So, in view ot our previous discussions, our primary goal is to find an expression for $\frac{ds}{dt}(0)$.

\begin{remark}\label{rem.equivalencia}
Let us observe that, by changing variables $y=\Phi_{-t}(x)$,
$$
s(t)= \inf_{u \in X} \int_{\Omega}\frac{|\nabla u D\Phi_{-t}\circ \Phi_{t} |^{p}}{p} J\Phi_t\, dy +\int_{\Omega} \frac{|u|^2}{2} J\Phi_t\, dy -\int_{\Omega} u f\circ \Phi_t J\Phi_t\, dy.
$$
\end{remark}

Let us call 
$$
\J_t u := \int_{\Omega}\frac{|\nabla u D\Phi_{-t}\circ \Phi_{t} |^{p}}{p} J\Phi_t\, dy +\int_{\Omega} \frac{|u|^2}{2} J\Phi_t\, dy - \int_{\Omega} u f\circ \Phi_t J\Phi_t\, dy
$$
and
$$
\J u := \int_{\Omega}\frac{|\nabla u|^{p}}{p}\, dy + \int_{\Omega} \frac{|u|^2}{2} \, dy - \int_{\Omega}u f\, dy.
$$

\begin{lema}\label{equicoercividad}
There exists $\delta>0$ such that the functionals $\{\J_t\}_{|t|<\delta}$ are uniformly coercive with respect to the weak topology of $X$. That is, for any $\lambda\in\R$, there exists a weakly compact set $K\subset X$ such that
$$
\{\J_t \le \lambda\} \subset K, \quad \text{for every } |t|<\delta.
$$ 
\end{lema}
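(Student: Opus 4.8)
The plan is to prove that, once $\delta$ is small enough, the sublevel set $\{\J_t\le\lambda\}$ is contained in a fixed closed ball of $X$ whose radius depends on $\lambda$ but not on $t$; since $X$ is reflexive, such a ball is weakly compact and will serve as the set $K$ in the statement.

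By Remark \ref{asintoticas} one can fix $\delta>0$ so that $\tfrac12\le J\Phi_s(x)\le 2$ and $\|D\Phi_s(x)\|\le 2$ for all $x\in\R^N$ and all $|s|<\delta$. Differentiating $\Phi_{-t}\circ\Phi_t = id$ shows that $D\Phi_{-t}(\Phi_t(y))$ is the inverse matrix of $D\Phi_t(y)$, so $|\xi|\le |\xi\,D\Phi_{-t}(\Phi_t(y))|\,\|D\Phi_t(y)\|\le 2\,|\xi\,D\Phi_{-t}(\Phi_t(y))|$ and hence $|\nabla u\,D\Phi_{-t}\circ\Phi_t|\ge\tfrac12|\nabla u|$ pointwise. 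Bounding also $\tfrac1p\ge\tfrac1{p_+}$ and $\big(\tfrac12\big)^{p}\ge\big(\tfrac12\big)^{p_+}$ in the gradient integral defining $\J_t$ yields
$$
\int_{\Omega}\frac{|\nabla u\,D\Phi_{-t}\circ\Phi_t|^{p}}{p}\,J\Phi_t\,dy \ \ge\ \frac{1}{2\,p_+\,2^{p_+}}\,\rho_{p(x)}(\nabla u)\ =:\ c_1\,\rho_{p(x)}(\nabla u),
$$
with $c_1>0$ independent of $t$, and likewise $\int_{\Omega}\tfrac{|u|^2}{2}J\Phi_t\,dy\ge\tfrac14\|u\|_2^2$. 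For the linear term the change of variables $z=\Phi_t(y)$ together with $\|J\Phi_{-t}\|_\infty\le 2$ gives $\|f\circ\Phi_t\|_2\le\sqrt2\,\|f\|_2$, whence, by Cauchy--Schwarz and Young's inequality,
$$
\Big|\int_{\Omega}u\,(f\circ\Phi_t)\,J\Phi_t\,dy\Big|\ \le\ 2\sqrt2\,\|f\|_2\,\|u\|_2\ \le\ \tfrac18\|u\|_2^2 + C\,\|f\|_2^2 .
$$

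Collecting the three estimates we get, for all $|t|<\delta$ and all $u\in X$, the inequality $\J_t u\ \ge\ c_1\,\rho_{p(x)}(\nabla u) + \tfrac18\|u\|_2^2 - C\,\|f\|_2^2$. Thus $\J_t u\le\lambda$ forces $\rho_{p(x)}(\nabla u)\le c_1^{-1}(\lambda+C\|f\|_2^2)$ and $\|u\|_2^2\le 8(\lambda+C\|f\|_2^2)$. By Remark \ref{minmax} the first bound controls $\|\nabla u\|_{p(x)}$, while the second controls $\|u\|_{L^2(\Omega)}$ and, since $|\Omega|<\infty$ and $p_+\le 2$, also $\|u\|_{p(x)}$ via the embedding $L^2(\Omega)\hookrightarrow L^{p(x)}(\Omega)$. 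Hence there is $R>0$, depending only on $\lambda$ and the data $(f,p_\pm,V,\Omega)$, such that $\|u\|_X\le R$ whenever $\J_t u\le\lambda$ and $|t|<\delta$.

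Finally, since $1<p_-\le p_+<\infty$ the space $W^{1,p(x)}(\Omega)$ is reflexive, and hence so is $X=W^{1,p(x)}(\Omega)\cap L^2(\Omega)$, being isometric to a closed subspace of the reflexive product $W^{1,p(x)}(\Omega)\times L^2(\Omega)$; consequently the closed ball $K:=\{v\in X:\|v\|_X\le R\}$ is weakly compact, and $\{\J_t\le\lambda\}\subset K$ for every $|t|<\delta$. I expect the only genuinely delicate step to be the $t$-uniform lower bound on the gradient term: it rests on controlling $D\Phi_{-t}$ from below through its inverse $D\Phi_t$, which is exactly the information supplied by Remark \ref{asintoticas}; the passage from the modular bound to the Luxemburg-norm bound and the weak compactness of the ball are then routine.
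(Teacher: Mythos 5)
Your proof is correct and follows essentially the same route as the paper: fix $\delta$ so that $\tfrac12\le J\Phi_t\le 2$, bound the gradient term from below uniformly in $t$ using that $D\Phi_{-t}\circ\Phi_t$ stays close to the identity, absorb the $f$-term by Young's inequality, pass from the modular bound to a norm bound via Remark \ref{minmax}, and take $K$ a (closed) ball of $X$. Your treatment is in fact slightly more careful than the paper's at two points the authors gloss over, namely the uniform control of $\|f\circ\Phi_t\|_2$ by a change of variables and the weak compactness of the ball via reflexivity (using $p_->1$, and $p_+\le 2$ to control $\|u\|_{p(x)}$ by $\|u\|_2$, consistent with the model where $p\in\{1+\epsilon,2\}$).
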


\begin{proof}
Take $\delta>0$ such that $\frac{1}{2}\leq J\Phi_t \leq 2$. Therefore, 
\begin{equation}\label{cotaJ}
\J_t u \geq \frac{1}{2}\int_{\Omega}\frac{|\nabla u D\Phi_{-t}\circ \Phi_{t} |^{p}}{p} \, dy +\frac{1}{2}\int_{\Omega} \frac{|u|^2}{2} \, dy - 2 \int_{\Omega} |f| |u|\, dy.
\end{equation}
By Young inequality with $\epsilon=\frac{1}{8}$,
\begin{equation}\label{young4}
2\int_{\Omega} |f| |u|\, dy \leq \frac{1}{8}\int_{\Omega} |u|^2\, dy+8\int_\Omega |f|^2\, dy.
\end{equation}

As $D\Phi_{-t}\rightrightarrows Id$ uniformly on $\Omega$, it follows that $\|D\Phi_t\|_\infty$ is bounded away from zero and infinity for every $|t|< \delta$, so
\begin{equation}\label{D-Id}
\int_{\Omega}\frac{|\nabla u D\Phi_{-t}\circ \Phi_{t} |^{p}}{p} \, dy\geq c \int_{\Omega}|\nabla u|^{p} \, dy.
\end{equation}
So, combining \eqref{cotaJ}, \eqref{young4} and \eqref{D-Id}, we get
$$
\J_t u \geq c\int_\Omega |\nabla u|^p\, dy + \frac18 \int_\Omega |u|^2\, dy - 8\|f\|_2^2.
$$
By Proposition \ref{minmax} we easily conclude that there exists a radius $R=R(\lambda)$ such that $\{\J_t\le \lambda\}\subset B_X(0, R)$.

Therefore, if we denote $K:=\{\|u\|_{X}<R\}$, satisfies our requirements. This finishes the proof of the lemma.
\end{proof}

The next lemma is stated for future reference, its proof is standard.
\begin{lema}\label{lemmaextremales}
There exists a unique extremal for $s(t)$ and $s(0)$.
\end{lema}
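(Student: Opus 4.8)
The plan is to obtain existence by the direct method of the calculus of variations and uniqueness by strict convexity. Since $\F_t\colon X\to X_t$ is a bounded linear isomorphism and, by Remark \ref{rem.equivalencia}, $s(t)=\inf_{u\in X}\J_t u$ (with $\J_0=\J$, so that $s(0)$ is just the case $t=0$), it suffices to prove that $\J_t$ possesses a unique minimizer over the fixed Banach space $X=W^{1,p}(\Omega)\cap L^2(\Omega)$; the extremal for $s(t)$ is then $\F_t$ applied to this minimizer, and the extremal for $s(0)$ is the minimizer of $\J$ itself.

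For existence, I would pick a minimizing sequence $\{u_n\}\subset X$ for $\J_t$. By Lemma \ref{equicoercividad} (for $|t|<\delta$; for a general fixed $t$ the same computation gives coercivity of $\J_t$ with $t$-dependent constants) the sequence is bounded in $X$. Because $p_-=1+\epsilon>1$, the space $L^{p(x)}(\Omega)$ is reflexive, hence so are $W^{1,p}(\Omega)$ and $X$, so a subsequence converges weakly, $u_n\rightharpoonup u$ in $X$. It then remains to check that $\J_t$ is weakly lower semicontinuous, which follows from two observations: first, $\J_t$ is convex, since the integrand $\xi\mapsto |\xi\,(D\Phi_{-t}\circ\Phi_t)(y)|^{p(y)}$ is convex (as $p(y)\ge 1$), the quadratic term is convex, and the last term is linear (indeed weakly continuous on $L^2(\Omega)$); second, each of these terms is strongly lower semicontinuous on $X$ (by Fatou's lemma along an a.e.\ convergent subsequence, using that modular and norm convergence are equivalent in $L^{p(x)}(\Omega)$ because $p_+<\infty$). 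A convex, strongly lower semicontinuous functional is weakly lower semicontinuous, so $\J_t u\le \liminf_n \J_t u_n=\inf_X\J_t$, i.e.\ $u$ is a minimizer.

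For uniqueness, I would note that the quadratic term $u\mapsto \frac12\int_\Omega |u|^2 J\Phi_t\,dy$ is strictly convex, since $J\Phi_t\ge\frac12>0$ on $\Omega$ and $s\mapsto s^2$ is strictly convex; thus two functions differing on a set of positive measure cannot both minimize, and together with the convexity of the remaining terms this makes $\J_t$ strictly convex, so it has at most one minimizer. I do not expect a genuine obstacle, since the statement is standard; the only points requiring a bit of care are the reflexivity of the variable-exponent Sobolev space for a merely measurable, discontinuous exponent — which does hold because $p_->1$ — and the equivalence between modular and norm convergence invoked for the lower semicontinuity of the energy term.
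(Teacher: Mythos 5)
Your proposal is correct and follows essentially the same route as the paper, which disposes of the lemma in one line by invoking strict convexity and sequential weak lower semicontinuity of $\J_t$ and $\J$ (together with the coercivity of Lemma \ref{equicoercividad}); you simply spell out the direct method: coercivity plus reflexivity of $X$ for a bounded minimizing sequence, convexity plus strong lower semicontinuity giving weak lower semicontinuity, and strict convexity coming from the $L^2$ term (with $J\Phi_t$ bounded below) for uniqueness. All of these details are accurate, so no changes are needed.
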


\begin{proof}
The proof is an immediate consequence of the fact that both $\J_t$ and $\J$ are strictly convex and sequentially weakly lower semicontinuous on $W^{1,p}(\Omega)$.
\end{proof}

Our first result shows that $s(t)$ is continuous with respect to $t$ at $t=0$.

\begin{teo}\label{teo.continuidad}
With the previous notation, 
\begin{equation}\label{st.to.s}
\lim_{t \stackrel{}{\rightarrow}0^+}s(t)=s(0).
\end{equation}
Moreover, if $u_t$ and $u$ are the extremals associated to $s(t)$ and $s(0)$ respectively, then $u_t\rightharpoonup u$ weakly in $W^{1,p}(\Omega)$. Finally, if $p^*:=\frac{pN}{N-p}>2$ then $u_t\to u$ strongly in $W^{1,p}(\Omega)$.
\end{teo}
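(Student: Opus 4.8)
The plan is a two–sided, $\Gamma$-convergence type argument, together with a uniform convexity step for the strong convergence.

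\emph{Upper bound and a priori bound.} Let $u$ be the extremal of $s(0)=\J u$ (which exists by Lemma~\ref{lemmaextremales}). Since $s(t)=\inf_{w\in X}\J_t w\le\J_t u$ by Remark~\ref{rem.equivalencia}, and since, as $t\to0$, $D\Phi_{-t}\circ\Phi_t\to Id$ and $J\Phi_t\to1$ uniformly on $\Omega$ while $f\circ\Phi_t\,J\Phi_t\to f$ in $L^2(\Omega)$, I would pass to the limit in $\J_t u$: the first two terms converge by dominated convergence (the integrands being dominated by $C(|\nabla u|^{p}+|u|^2)\in L^1$), the last by $L^2$-continuity of the inner product. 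Hence $\J_t u\to\J u=s(0)$ and $\limsup_{t\to0^+}s(t)\le s(0)$. In particular $s(t)\le s(0)+1$ for $|t|$ small, so Lemma~\ref{equicoercividad} confines the extremals $u_t$ to a fixed weakly compact $K\subset X$; equivalently, $\{u_t\}$ is bounded in $X$ and, by Proposition~\ref{propdesigualdades}, $\int_\Omega|\nabla u_t|^{p}\,dy$ is bounded.

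\emph{A comparison estimate.} The key technical point will be that $\J_t w-\J w\to0$ as $t\to0$, uniformly for $w$ in bounded subsets of $X$. Writing $A_t:=D\Phi_{-t}\circ\Phi_t$,
\begin{align*}
\J_t w-\J w&=\int_\Omega\frac{|\nabla w\,A_t|^{p}-|\nabla w|^{p}}{p}\,J\Phi_t\,dy+\int_\Omega\frac{|\nabla w|^{p}}{p}\,(J\Phi_t-1)\,dy\\
&\quad+\int_\Omega\frac{|w|^2}{2}\,(J\Phi_t-1)\,dy-\int_\Omega w\,(f\circ\Phi_t\,J\Phi_t-f)\,dy,
\end{align*}
and using $\big||a\,A_t|^{p}-|a|^{p}\big|\le C\,\|A_t-Id\|_\infty\,|a|^{p}$ (with $C$ depending only on $p_+$ and $\sup_{|t|<\delta}\|A_t\|_\infty$), Hölder's inequality in $L^2$, and the bounds just obtained, each term is $o(1)$. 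Taking $w=u_t$ gives $\J u_t-s(t)\to0$.

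\emph{Lower bound, weak convergence, uniqueness.} Given any $t_n\to0^+$, by the a priori bound I extract $u_{t_n}\rightharpoonup\bar u$ weakly in $X$ (hence in $W^{1,p}(\Omega)$ and in $L^2(\Omega)$). Since $\J$ is convex and norm lower semicontinuous (its gradient part is convex and norm continuous, its quadratic part is continuous on $L^2$) while $w\mapsto\int_\Omega wf$ is weakly continuous on $L^2$, $\J$ is weakly lower semicontinuous, so
$$
s(0)\le\J\bar u\le\liminf_n\J u_{t_n}=\liminf_n s(t_n)\le\limsup_n s(t_n)\le s(0).
$$
Thus $s(t_n)\to s(0)$ and $\J\bar u=s(0)$, so $\bar u$ is the extremal of $s(0)$, i.e. $\bar u=u$ by Lemma~\ref{lemmaextremales}. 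As $t_n$ was arbitrary, $s(t)\to s(0)$ and $u_t\rightharpoonup u$ in $W^{1,p}(\Omega)$, which proves the first two assertions.

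\emph{Strong convergence when $p^*>2$, and the main obstacle.} Here the Sobolev embedding $W^{1,p}(\Omega)\hookrightarrow L^2(\Omega)$ is compact (this is precisely where $p^*>2$ is used, via $W^{1,p}(\Omega)\subset W^{1,p_-}(\Omega)$), so $u_t\to u$ strongly in $L^2(\Omega)$; combined with $\J u_t\to\J u$ from the comparison estimate and the first part, the lower order terms pass to the limit and the gradient energy converges: $F(u_t)\to F(u)$, where $F(w):=\int_\Omega\frac{|\nabla w|^{p}}{p}\,dy=F_1(w)+F_2(w)$ with $F_i(w):=\int_{D_i}\frac{|\nabla w|^{p_i}}{p_i}\,dy$, $p_1=1+\ve$, $p_2=2$. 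Weak convergence in $W^{1,p}(\Omega)$ gives $\nabla u_t\rightharpoonup\nabla u$ weakly in each $L^{p_i}(D_i)$ (test against functions extended by $0$ to $\Omega$), so each $F_i$ is weakly lower semicontinuous along $(u_t)$; since the sum converges, each $F_i(u_t)\to F_i(u)$, i.e. $\|\nabla u_t\|_{L^{p_i}(D_i)}\to\|\nabla u\|_{L^{p_i}(D_i)}$. As $L^{p_i}(D_i)$ is uniformly convex ($1<p_i<\infty$), weak convergence together with convergence of norms yields $\nabla u_t\to\nabla u$ strongly in $L^{p_i}(D_i)$; hence $\rho_{p(x)}(\nabla u_t-\nabla u)=\sum_{i=1}^{2}\|\nabla u_t-\nabla u\|_{L^{p_i}(D_i)}^{p_i}\to0$ and, by Remark~\ref{minmax}, $\nabla u_t\to\nabla u$ in $L^{p(x)}(\Omega)$; since also $u_t\to u$ in $L^2(\Omega)$, hence in $L^{p(x)}(\Omega)$, we get $u_t\to u$ in $W^{1,p}(\Omega)$. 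I expect the two delicate points to be the uniform comparison estimate with a variable (and on $D_1$ subquadratic) exponent, and the final upgrade from energy convergence to strong convergence: because $|\xi|^{1+\ve}$ is not uniformly convex one cannot argue pointwise, so one is forced to split the energy additively over $D_1$ and $D_2$ and invoke the uniform convexity (Radon--Riesz property) of the fixed Lebesgue spaces $L^{p_i}(D_i)$.
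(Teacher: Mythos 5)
Your proof is correct, and its skeleton is the one the paper follows: an upper bound via $s(t)\le \J_t(u)$, equicoercivity (Lemma \ref{equicoercividad}) to confine the minimizers, a comparison $\J_t-\J\to 0$ uniformly on bounded sets of $X$, identification of weak limit points through uniqueness of the extremal (Lemma \ref{lemmaextremales}), and, for the strong convergence under $p^*>2$, convergence of the gradient energy combined with weak convergence. The differences are in the implementation, and they are worth noting. Where the paper passes from uniform convergence plus equicoercivity to convergence of minima and of minimizers by invoking its appendix result (Theorem \ref{convergencia sobre acotados} via Remark \ref{obs gamma convergencia}), you run the liminf/limsup argument directly, which is self-contained but amounts to proving the relevant special case of that theorem inline. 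Your pointwise bound $\bigl||\nabla w\,A_t|^{p}-|\nabla w|^{p}\bigr|\le C\|A_t-Id\|_\infty\,|\nabla w|^{p}$, integrated against the bounded modulars of Proposition \ref{propdesigualdades}, is a genuinely uniform (and arguably cleaner) substitute for the paper's $O(t)$/Taylor manipulations from Remark \ref{asintoticas}, whose uniformity on bounded sets is left rather implicit. Finally, the paper settles the last step with the citation ``convergence of the modulars implies strong convergence (see \cite{Diening})'', after showing convergence of the combined quantity $\int|\nabla u_t|^{p}/p+\int|u_t|^2/2$; you make this explicit by using the compact embedding to isolate the gradient term, splitting it over $D_1$ and $D_2$, combining weak lower semicontinuity of each piece with convergence of the sum, and then applying the Radon--Riesz property of the uniformly convex spaces $L^{p_i}(D_i)$ — exactly the right device for a piecewise constant exponent, where no pointwise uniform convexity of $|\xi|^{1+\epsilon}$ is available. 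The only fact you assert without proof, $f\circ\Phi_t\,J\Phi_t\to f$ in $L^2(\Omega)$, is established in the paper by the same density argument you would use, so it is a standard ingredient rather than a gap.
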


\begin{remark}
The hypothesis $p^*>2$ is needed in order to secure the compact embedding $W^{1,p}(\Omega)\subset L^{2}(\Omega)$ for any dimension $N$.

For the case $N=2$, one has $p^*>2$ for any $p>1$ so no extra hypothesis is needed.
\end{remark}

\begin{proof}
Since, by Lemma \ref{equicoercividad}, we know that the functionals $\J_t$ are uniformly coercive, the proof of \eqref{st.to.s} will follow from Remark \ref{obs gamma convergencia} if we show that $\J_t \rightrightarrows \J$ uniformly on bounded sets of $X$. Observe that since the minimizers are unique, we will then have that the whole sequence of minimizers is weakly convergent.

Let us consider now $B \subset X$ a bounded subset and $u \in B$. By Remark \ref{asintoticas},
\begin{align*}
\J_t u=&\int_{\Omega}\frac{|\nabla u (Id+O(t)) |^{p}}{p} (1+O(t))\, dy +\int_{\Omega} \frac{|u|^2}{2} (1+O(t))\, dy - \int_{\Omega} u (f\circ \Phi_{t})(1+O(t))\, dy\\
=&(1+O(t))\left\{\int_{\Omega}\frac{|\nabla u(Id +O(t))|^{p}}{p} \, dy +\int_{\Omega} \frac{|u|^2}{2}\, dy - \int_{\Omega} u (f\circ \Phi_t)\, dy \right\}.
\end{align*}
Again by Remark \ref{asintoticas}, and by Taylor expansion formula, we get
$$
\int_{\Omega}\frac{|\nabla u(Id +O(t))|^{p}}{p} \, dy=\int_{\Omega}\frac{|\nabla u|^{p}}{p}\, dy + O(t),
$$
uniformly in $B$.

Assume for a moment that $f$ is a continuous function with compact support. Then, since $\Phi_{t}\to id$ uniformly as $t \to 0$, we have that $f\circ\Phi_t\to f$ uniformly as $t \to 0$ and therefore,
$$
\|f\circ\Phi_t-f\|^{2}_{2}=\int_{\Omega}|f\circ\Phi_t-f|^{2}\, dx\leq \|f\circ\Phi_t-f\|^{2}_{\infty}|\Omega|\to 0,\ (t\to 0).
$$ 
And so we have that $\|f\circ\Phi_t-f\|_{2}\to 0$,$\ (t\to 0)$.

Now, by a standard density argument, it is easy to see that the same result holds for any $f \in L^{2}(\Omega)$.


Then, by H\"older inequality and since $u \in B$, there is a constant $C$, independent of $u$, such that
$$
\Big|\int_{\Omega}u(f\circ \Phi_t-f)\Big|\leq C\|f\circ \Phi_t - f\|_2\to 0
$$
as $t\to 0^+$.

Assume now that $p^*>2$. It remains to see the strong convergence of $u_t$ to $u$ in $W^{1,p}(\Omega)$.

Let us observe that in order to see the strong convergence it is enough to show the convergence of the modulars (see \cite{Diening}).

Let us now recall that
\begin{align*}
\int_{\Omega}\frac{|\nabla u_t|^{p}}{p}\, dy+\int_{\Omega} \frac{|u_t|^2}{2}\, dy =& s(t) + \int_{\Omega}\frac{|\nabla u_t|^{p}}{p}\, dy-\int_{\Omega}\frac{|\nabla u_t D\Phi_{-t}\circ \Phi_{t} |^{p}}{p} J\Phi_t\, dy\\
&+\int_{\Omega} \frac{|u_t|^2}{2}(1-J\Phi_t)\, dy+\int_{\Omega} u_t (f\circ \Phi_t) J\Phi_t\, dy.
\end{align*}
By Remark \ref{asintoticas},
\begin{align*}
\int_{\Omega}\frac{|\nabla u_t D\Phi_{-t}\circ \Phi_{t} |^{p}}{p} J\Phi_t\, dy &=\int_{\Omega} \frac{|\nabla u_t - t\nabla u_t DV + o(t)|^{p}}{p} (1 + t\div V + o(t))\, dy. 
\end{align*} 
Using the following Taylor expansion, 
$$
|\nabla u_t - t \nabla u_t DV + o(t)|^{p} = |\nabla u_t|^{p}- p t |\nabla u_t|^{p-2} \nabla u_t\cdot\nabla u_t DV + o(t),
$$
we find that
$$
\int_{\Omega}\frac{|\nabla u_t D\Phi_{-t}\circ \Phi_{t}|^{p}}{p} J\Phi_t\, dy = \int_{\Omega}\frac{|\nabla u_t|^{p}+t(|\nabla u_t|^{p}\div V-p|\nabla u_t|^{p-2}\nabla u_t\cdot\nabla u_t DV)}{p}\, dy+o(t). 
$$
And so we have
$$
\int_{\Omega}\frac{|\nabla u_t|^{p}}{p}\, dy-\int_{\Omega}\frac{|\nabla u_t D\Phi_{-t}\circ \Phi_{t} |^{p}}{p} J\Phi_t\, dy=-\int_{\Omega}\frac{t(|\nabla u_t|^{p}\div V-p|\nabla u_t|^{p-2}\nabla u_t\cdot\nabla u_t DV)}{p}\, dy+o(t)
$$

Now, for our fourth term, we only need to observe that $\frac{|u_t|^2}{2}$ is bounded and $1-J\Phi_t \rightarrow 0$ uniformly.

Then, since $s(t)\rightarrow s(0)$ and 
$$
\int_{\Omega} u_t (f\circ \Phi_t) J\Phi_t\, dy\rightarrow \int_{\Omega} u f,
$$
we can conclude that
$$
\int_{\Omega}\frac{|\nabla u_t|^{p}}{p}\, dy+\int_{\Omega} \frac{|u_t|^2}{2}\, dy \rightarrow \int_{\Omega}\frac{|\nabla u|^{p}}{p}\, dy+\int_{\Omega} \frac{|u|^2}{2}\, dy,
$$
which completes the proof.
\end{proof}

Now we prove the main result of the section, namely the differentiability of the cost functional $s(t)$. For this result we will need the function $f$ to be of class $C^1$. 
\begin{teo}
$s(t)$ is differentiable at $t=0$ and
$$
\frac{ds}{dt}(0)=R(u) - \int_\Omega uf \div V\, dy - \int_\Omega u \nabla f\cdot V\, dy,
$$
where
$$
R(u):=\int_{\Omega}\frac{|\nabla u|^{p}}{p}\div V-|\nabla u|^{p-2} \nabla u\cdot\nabla uDV+ \div V\frac{|u|^2}{2}\, dy
$$
and $u$ is the extremal of $s(0)$.
\end{teo}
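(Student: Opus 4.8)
The plan is to run the classical envelope (Danskin–type) argument for a minimum depending on a parameter: bound the increment $s(t)-s(0)$ from above by inserting the fixed extremal $u$ of $s(0)$ into $\J_t$, bound it from below by inserting the moving extremal $u_t$ of $s(t)$ into $\J$, and then pass to the limit using that $u_t\to u$ strongly, which is exactly what Theorem~\ref{teo.continuidad} supplies. The only genuinely new point compared with the smooth–exponent situation is that the first–variation expansion of $\J_t$ must be made \emph{uniform} over bounded subsets of $X$ in spite of the variable, possibly sub-quadratic, exponent.

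\emph{Step 1: the first variation of $\J_t$, uniformly on bounded sets.} For $w\in X$ put
$$
\Lambda(w):=R(w)-\int_\Omega wf\,\div V\,dy-\int_\Omega w\,\nabla f\cdot V\,dy .
$$
I will show that for every bounded $B\subset X$ there is a function $\omega=\omega_B$ with $\omega(t)\to 0$ as $t\to 0$ and
$$
\bigl|\J_t w-\J w-t\,\Lambda(w)\bigr|\le |t|\,\omega(t)\qquad\text{for all }w\in B .
$$
This follows by expanding the three integrands of $\J_t$ via Remark~\ref{asintoticas}. The delicate term is $\int_\Omega\frac{|\nabla w\,(D\Phi_{-t}\circ\Phi_t)|^{p}}{p}J\Phi_t\,dy$: writing $D\Phi_{-t}\circ\Phi_t=Id+tA_t$ with $\|A_t+DV\|_\infty=o(1)$, the identity $|(Id+tA)\xi|^2=|\xi|^2+2t\langle\xi,A\xi\rangle+t^2|A\xi|^2$ gives
$$
|(Id+tA_t)\xi|^{p}=|\xi|^{p}-p\,t\,|\xi|^{p-2}\langle\xi, DV\,\xi\rangle+|\xi|^{p}\,o(t),
$$
where the crucial point is that the $\xi$– (and $y$–) dependence of the remainder factors through $|\xi|^{p}$, the remaining factor depending only on $t$ and on $\|DV\|_\infty$. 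Multiplying by $J\Phi_t=1+t\,\div V+o(t)$, dividing by $p$ and integrating, this remainder contributes $o(t)\,\rho_{p(x)}(\nabla w)$, which is $o(t)$ uniformly on $B$ because the modular is bounded on bounded sets (Proposition~\ref{propdesigualdades}). The quadratic term $\int_\Omega\frac{|w|^2}{2}J\Phi_t\,dy$ is handled identically, with remainder $o(t)\|w\|_2^2$. For the term with $f$ we use the hypothesis $f\in C^1$: since $\Phi_t(y)=y+tV(y)+o(t)$ uniformly and $\nabla f$ is uniformly continuous on $\sop V$, one has $f\circ\Phi_t=f+t\,\nabla f\cdot V+r_t$ with $\|r_t\|_\infty=o(t)$, whence $\int_\Omega w(f\circ\Phi_t)J\Phi_t\,dy=\int_\Omega wf\,dy+t\int_\Omega w(\nabla f\cdot V+f\,\div V)\,dy+o(t)$ uniformly on $B$ by Cauchy–Schwarz. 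Adding the three expansions yields the claim.

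\emph{Step 2: the two bounds and the passage to the limit.} Let $u$ be the extremal of $s(0)$ and, for $t\neq 0$, let $u_t$ be the extremal of $s(t)$ (Lemma~\ref{lemmaextremales}), so that $\J u=s(0)$ and, by \eqref{st} and Remark~\ref{rem.equivalencia}, $\J_t u_t=s(t)$. Minimality gives $\J_t u_t\le\J_t u$ and $\J u\le\J u_t$, hence
$$
\J_t u_t-\J u_t\le s(t)-s(0)\le\J_t u-\J u .
$$
By Theorem~\ref{teo.continuidad} the family $\{u_t\}_{|t|<\delta}$ is bounded in $X$; applying Step~1 on a bounded set containing $u$ and all these $u_t$ we get $s(t)-s(0)=t\,\Lambda(u_t)+|t|\,O(\omega(t))$ for the lower term and $s(t)-s(0)\le t\,\Lambda(u)+|t|\,\omega(t)$, so that for $t>0$
$$
\Lambda(u_t)-\omega(t)\le\frac{s(t)-s(0)}{t}\le\Lambda(u)+\omega(t).
$$
Now invoke Theorem~\ref{teo.continuidad} again: $u_t\to u$ strongly in $W^{1,p}(\Omega)$ — this is where the hypothesis $p^*>2$ (automatic for $N=2$) is used — and hence also in $L^2(\Omega)$. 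Consequently $\nabla u_t\to\nabla u$ in $L^{p_i}(D_i)$ for $i=1,2$, which gives $|\nabla u_t|^{p_i}\to|\nabla u|^{p_i}$ in $L^1(D_i)$ and $|\nabla u_t|^{p_i-2}\nabla u_t\to|\nabla u|^{p_i-2}\nabla u$ in $L^{p_i'}(D_i)$ (continuity of the Nemytskii map, see \cite{Diening}); since $\div V,\,DV\in L^\infty$ this yields $R(u_t)\to R(u)$, and since $f\,\div V,\ \nabla f\cdot V\in L^2$ the remaining two terms of $\Lambda(u_t)$ converge as well. Thus $\Lambda(u_t)\to\Lambda(u)$ and, letting $t\to 0^+$,
$$
\lim_{t\to 0^+}\frac{s(t)-s(0)}{t}=\Lambda(u)=R(u)-\int_\Omega uf\,\div V\,dy-\int_\Omega u\,\nabla f\cdot V\,dy .
$$
The case $t\to 0^-$ is identical — dividing the displayed inequalities by $t<0$ flips them, but the two-sided sandwich still closes on $\Lambda(u)$ — or one may simply replace $V$ by $-V$, since $\Phi_{-t}$ is the flow of $-V$. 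Hence $s$ is differentiable at $0$ with the stated value of $\frac{ds}{dt}(0)$.

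\emph{Main obstacle.} The heart of the matter is Step~1: making the first–variation expansion of $\J_t$ hold \emph{uniformly in $w$} over bounded sets, so that it can be fed the moving competitor $u_t$. The only subtlety there is the Taylor expansion of $\xi\mapsto|\xi|^{p}$ for the sub-quadratic value $p=1+\epsilon$ on $D_1$, where the Hessian of $|\xi|^p$ degenerates at the origin; this is circumvented by the observation that the remainder is $|\xi|^p$ times a factor independent of $\xi$, so integration costs only the modular, which is bounded. The second indispensable ingredient is the strong convergence $u_t\to u$ of Theorem~\ref{teo.continuidad}: it cannot be weakened, because $\div V$ and $DV$ are sign-changing and hence the gradient terms of $R(u_t)$ are not weakly continuous.
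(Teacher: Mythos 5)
Your proposal is correct and follows essentially the same route as the paper: bound $s(t)-s(0)$ from above with the fixed extremal $u$ and from below with the moving extremal $u_t$, expand $\J_t$ via Remark \ref{asintoticas}, and pass to the limit using the strong convergence $u_t\to u$ from Theorem \ref{teo.continuidad} (the paper phrases the lower bound through a liminf-realizing sequence $t_n$ rather than your two-sided sandwich, which is only a cosmetic difference). Your Step 1, factoring the Taylor remainder of $|\xi|^p$ as $|\xi|^p$ times a $\xi$-independent $o(t)$ so that it costs only the modular, is a welcome explicit justification of the uniformity that the paper uses implicitly.
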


\begin{proof}
By Lemma \ref{lemmaextremales}, we can consider $u$ the extremal of $s(0)$. Then, by Remark \ref{rem.equivalencia},
$$
s(t)=\inf_{X} \J_{t} \leq \J_{t}(u) = \int_{\Omega}\frac{|\nabla u D\Phi_{-t}\circ \Phi_{t} |^{p}}{p} J\Phi_t\, dy +\int_{\Omega} \frac{|u|^2}{2} J\Phi_t\, dy - \int_{\Omega} u f\circ \Phi_t J\Phi_t\, dy.
$$
Now, by Remark \ref{asintoticas}, as in the proof of Theorem \ref{teo.continuidad}
we find that
$$
\int_{\Omega}\frac{|\nabla u D\Phi_{-t}\circ \Phi_{t}|^{p}}{p} J\Phi_t\, dy = \int_{\Omega}\frac{|\nabla u|^{p}+t(|\nabla u|^{p}\div V-p|\nabla u|^{p-2}\nabla u\cdot\nabla uDV)}{p}\, dy+o(t). 
$$
On the other hand, again by Remark \ref{asintoticas},
\begin{align*}
\int_{\Omega} \frac{|u|^2}{2} J\Phi_t\, dy &= \int_{\Omega} \frac{|u|^2}{2} (1+t\div V+o(t))\, dy\\
&= \int_{\Omega} \frac{|u|^2}{2}\, dy+t\int_{\Omega} \div V\frac{|u|^2}{2}\, dy+o(t).
\end{align*}
Therefore, setting
$$
R(u):=\int_{\Omega}\frac{|\nabla u|^{p}}{p}\div V-|\nabla u|^{p-2}\nabla u\cdot\nabla uDV+ \div V\frac{|u|^2}{2}\, dy,
$$
we can conclude that
$$
s(t)\leq \int_{\Omega}\frac{|\nabla u|^{p}}{p}\, dy+\int_{\Omega} \frac{|u|^2}{2}\, dy+tR(u)+o(t)-\int_{\Omega} u (f\circ \Phi_t) (1 + t\div V + o(t))\, dy.
$$
Recall that
$$
s(0) = \int_{\Omega}\frac{|\nabla u|^{p}}{p}\, dy+\int_{\Omega} \frac{u^2}{2}\, dy - \int_\Omega u f\, dy.
$$
Therefore,
$$
\frac{s(t)-s(0)}{t}\leq R(u)+\frac{o(t)}{t}-\int_{\Omega}u (f\circ \Phi_t) \div V\, dy - \int_\Omega u \frac{(f\circ \Phi_t) - f}{t}\, dy.
$$
Taking the limit $t\to 0^{+}$, we get
$$
\limsup_{t\to 0+} \frac{s(t)-s(0)}{t} \leq R(u) - \int_\Omega uf \div V\, dy - \int_\Omega u \nabla f\cdot V\, dy,
$$
where we have used the fact that $\Phi_0=id$ and $\dot \Phi_t = V\circ \Phi_t$.

Let us consider now $\{t_n\}_{n \in \mathbb{N}}$ such that $t_{n}\rightarrow 0^+$ and
$$
\liminf_{t \rightarrow 0^+}\frac{s(t)-s(0)}{t}=\lim_{n \rightarrow \infty}\frac{s(t_n)-s(0)}{t_n}.
$$
Let $u_n:=u_{t_n}\in X_{t_n}$ be the extremal associated to $s(t_n)$. By Remark \ref{rem.equivalencia},
$$
s(t_n)= \int_{\Omega}\frac{|\nabla u_n D\Phi_{-t_n}\circ \Phi_{t_n} |^{p}}{p} J\Phi_{t_n}\, dy +\int_{\Omega} \frac{|u_n|^2}{2} J\Phi_{t_n}\, dy -\int_{\Omega} u_{t_n} f\circ \Phi_{t_n} J\Phi_{t_n}\, dy
$$
Arguing as in the previous case, we have that
\begin{align*}
\frac{s(t_n)-s(0)}{t_n}\geq & \int_{\Omega}\frac{|\nabla u_n|^{p}}{p}\div V-|\nabla u_n|^{p-2}\nabla u_n\cdot\nabla u_{n}DV+ \div V\frac{|u_n|^2}{2}\, dy\\
&+\frac{o(t_n)}{t_n}-\int_{\Omega}u_{n} (f\circ \Phi_{t_n}) \div V\, dy - \int_\Omega u_n \frac{(f\circ \Phi_{t_n}) - f}{t_n}\, dy\\
=& R(u_n) + \frac{o(t_n)}{t_n}-\int_{\Omega}u_{n} (f\circ \Phi_{t_n}) \div V\, dy - \int_\Omega u_n \frac{(f\circ \Phi_{t_n}) - f}{t_n}\, dy.
\end{align*}
Since $R(u_n) \rightarrow R(u)$ when $n \rightarrow \infty$ (just observe that $R$ is continuous with respect to the strong topology and $u_n\to u$ in $W^{1,p}(\Omega)$ by Theorem \ref{teo.continuidad}), we have
$$
\liminf_{t \rightarrow 0^+}\frac{s(t)-s(0)}{t}\geq R(u) - \int_\Omega uf \div V\, dy - \int_\Omega u \nabla f\cdot V\, dy. 
$$
And so we can conclude that $s(t)$ is differentiable at $t=0$ and
$$
\frac{ds}{dt}(0)=R(u) - \int_\Omega uf \div V\, dy - \int_\Omega u \nabla f\cdot V\, dy,
$$
where $u\in X$ is the extremal of $s(0)$. This completes the proof.
\end{proof}

\section{Improvement of the formula.}

Now we try to find a more explicit formula for $s'(0)$. In the following study, we will need the solution $u$ to
\begin{equation}
\left\{
\begin{array}{rl}
-\Delta_{p(x)}u+u=f & \text{in } \Omega,\\
u=0 & \text{on } \partial\Omega,
\end{array} \right.
\end{equation}
to be $C_{\text{loc}}^2(D_1)\cap C^2_{\text{loc}}(D_2)$ in order for our computations to work. However, this is not true since the optimal regularity is known to be $C_{\text{loc}}^{1,\alpha}(D_1)\cap C^{1,\alpha}_{\text{loc}}(D_2)$. See \cite{Tolksdorf}.

In order to overcome such difficulty, we will proceed as follows.

\subsection{Domain regularization.}

Let us first define $D_i(t):=\Phi_t(D_i)$.

Now given a fixed $\delta>0$, we define the following sets
$$
D^{\delta}_i:=\{x\in D_i \colon \dist(x,D_j)>\delta\},\ i\neq j
$$
and consider $D^{\delta}_i(t):=\Phi_t(D^{\delta}_i)$. And now consider the sets
$$
\Gamma^{\delta}_i(t):= \partial D^{\delta}_i(t) \cap \Omega.
$$
Let us observe that, in each $D^{\delta}_i$, the exponent $p(x)=p_i$ is constant so we can apply the classic regularity results. See for instance \cite{Tolksdorf}.

Now we define the sets $A_\delta:= \Omega \setminus (D_1^\delta \cup D_2^\delta)$ and observe that
$$
\partial A_\delta \cap \Omega=\Gamma^{\delta}_1\cup \Gamma^{\delta}_2,
\qquad \Omega=D^{\delta}_1\cup D^{\delta}_2\cup A^{\delta}.
$$

See Figure 1.

\begin{figure}\label{figura1}\caption{Partition of $\Omega$}
\begin{center}
    \includegraphics[width=7cm]{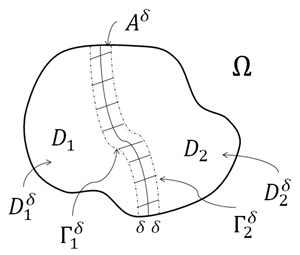}
\end{center}
\end{figure}

\subsection{Operator regularization.}
Now, for $\epsilon \geq 0$, we consider the regularized problems
\begin{equation}\label{eqregul}
\left\{
\begin{array}{rl}
-\div((|\nabla v|^{2}+\epsilon^{2})^{\frac{p_i-2}{2}}\nabla v)+v=f^{\epsilon} & \text{in } D^{\delta}_i(t),\\
v=0 & \text{on } \partial\Omega \cap (\overline{D^{\delta}_1(t)}\cup \overline{D^{\delta}_2(t)}),\\
v=u(0)\circ \Phi^{-1}_t & \text{on } \Gamma^{\delta}_i(t),
\end{array} \right.
\end{equation}
with $f^{\epsilon}\in C^{\infty}$ such that $f^{\epsilon}\rightarrow f$ in $L^{p'}$. 

\begin{remark}
Applying classical estimates (see for instance \cite{GT} it is possible to see that the solution of $\eqref{eqregul}$ is $C^{2,\alpha}_{\text{loc}}(D^{\delta}_i)\cap C^{1}(\overline{D^{\delta}_i})$ if $\epsilon > 0$, since $u(0)$ is $C^1(\overline{D^{\delta}_i})$ and $\Phi_t$ is the identity map in a neighborhood of $\partial\Omega$. See also \cite{Tolksdorf} for regularity estimates in Sobolev spaces.
\end{remark}

Let us define the following sets
$$
X^{\delta}_i:=\{v \in W^{1,p_i}(D^{\delta}_i) \text{ such that } v=0 \text{ in } \partial\Omega \cap \overline{D^{\delta}_i} \text{ and } v=u  \text{ in } \Gamma^{\delta}_i\}.
$$
$$
X^{\delta}_i(t):=\{v \in W^{1,p_i}(D^{\delta}_i(t)) \text{ such that } v=0 \text{ in } \partial\Omega \cap \overline{D^{\delta}_i(t)} \text{ and } v=u(0)\circ \Phi^{-1}_t  \text{ in } \Gamma^{\delta}_i(t)\}.
$$

Let us also consider the functionals $\tilde{\J}^{\epsilon,\delta}_{t,i}\colon X^{\delta}_i(t) \to \R$ defined by
$$
\tilde{\J}^{\epsilon,\delta}_{t,i}(v):=\int_{D^{\delta}_i(t)}\frac{\big(|\nabla v|^2 
 + \epsilon^2\big)^{\frac{p_i}{2}}}{p_i}\, dy +\int_{D^{\delta}_i(t)} \frac{|v|^2}{2}\, dy -\int_{D^{\delta}_i(t)} v f^{\epsilon}\, dy.
$$

\begin{remark}
$X^{\delta}_i(t)$ is strongly closed and convex, therefore it is weakly closed. 
\end{remark}

\begin{remark}
The solutions of \eqref{eqregul} are the minimums of the functionals $\tilde{\J}^{\epsilon,\delta}_{t,i}$ in $X^{\delta}_i(t)$.

Since the functional $\tilde{\J}^{\epsilon,\delta}_{t,i}$ is continuous for the strong topology, strictly convex and coercive, it has a unique minimum in $X^{\delta}_i(t)$ and, therefore, \eqref{eqregul} has a unique weak solution.

We will denote $\tilde{u}^{\epsilon,\delta}_i(t)$ as the function where the minimum is attained. 
\end{remark}

\begin{remark}
Observe that $\psi_t \colon X^{\delta}_i \to X^{\delta}_{i}(t)$ defined by $v \mapsto v \circ \Phi_{t}^{-1}$ is a biyection between $X^{\delta}_i$ and $X^{\delta}_{i}(t)$ and the following equality holds
$$
\J^{\epsilon,\delta}_{t,i}=\tilde{\J}^{\epsilon,\delta}_{t,i}\circ \psi_{t}^{-1}.
$$
\end{remark}

By changing variables as in the previous section we get the functional $\J^{\epsilon,\delta}_{t,i}\colon X^{\delta}_i \to \R$ given by
$$
\J^{\epsilon,\delta}_{t,i}(v):=\int_{D^{\delta}_i}\frac{\big(|\nabla v D\Phi_{-t}\circ \Phi_{t} |^2 
 + \epsilon^2\big)^{\frac{p_i}{2}}}{p_i} J\Phi_t\, dy +\int_{D^{\delta}_i} \frac{|v|^2}{2} J\Phi_t\, dy -\int_{D^{\delta}_i} v f^{\epsilon}\circ \Phi_t J\Phi_t\, dy.
$$
and define
$$
s^{\epsilon,\delta}_i(t)= \inf_{v \in X^{\delta}_i} \J^{\epsilon,\delta}_{t,i}(v) = \inf_{v \in X^{\delta}_i(t)} \tilde \J^{\epsilon,\delta}_{t,i}(v).
$$
We will denote $u^{\epsilon,\delta}_i(t)\in X_i^\delta$ as the function where the minimum of $\J^{\epsilon,\delta}_{t,i}$ is attained. 

Observe that $u^{\epsilon,\delta}_i(t)(x)=\tilde{u}^{\epsilon,\delta}_i(t)(\Phi_t(x))$.

In order to make the notation lighter, we will focus on the needed parameter in each step. First, $u_i$, then $u^{\epsilon}$ and finally, $u^{\delta}$. 

Let us now define $s^{\epsilon,\delta}$ as $s^{\epsilon,\delta}:=s^{\epsilon,\delta}_1+s^{\epsilon,\delta}_2$.

\begin{prop}\label{convergenciasolu}
If $2<p_i^*$, then $u^{\epsilon,\delta}_i(0)$ converges to $u^{0,\delta}_i(0)(=u^{\delta}_i)$ strongly in $W^{1,p_i}(D^{\delta}_i)$ and $s^{\epsilon,\delta}_i(0)$ converges to $s^{0,\delta}_i(0)(=s^{\delta}_i(0))$ when $\epsilon \to 0$.
\end{prop}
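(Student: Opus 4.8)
The plan is to run the direct-method argument already used in the proof of Theorem \ref{teo.continuidad}, now with the regularization parameter $\epsilon$ in the role of $t$. Write $u_\epsilon:=u^{\epsilon,\delta}_i(0)$, $u_0:=u^{0,\delta}_i(0)=u^\delta_i$, and recall that at $t=0$ one has $\J^{\epsilon,\delta}_{0,i}=\tilde{\J}^{\epsilon,\delta}_{0,i}$ on $X^\delta_i$, so $s^{\epsilon,\delta}_i(0)=\tilde{\J}^{\epsilon,\delta}_{0,i}(u_\epsilon)$. I would first get a uniform upper bound by testing the infimum defining $s^{\epsilon,\delta}_i(0)$ with the fixed competitor $u_0\in X^\delta_i$:
$$
s^{\epsilon,\delta}_i(0)\le \int_{D^\delta_i}\frac{(|\nabla u_0|^2+\epsilon^2)^{p_i/2}}{p_i}\,dy+\int_{D^\delta_i}\frac{|u_0|^2}{2}\,dy-\int_{D^\delta_i}u_0 f^\epsilon\,dy.
$$
For $0<\epsilon\le1$ the first integrand is dominated by $(|\nabla u_0|^2+1)^{p_i/2}\in L^1(D^\delta_i)$ and tends pointwise to $|\nabla u_0|^{p_i}$, while $f^\epsilon\to f$ in $L^{p_i'}(D^\delta_i)$ and $u_0\in L^{p_i}(D^\delta_i)$; dominated convergence together with H\"older then gives $\limsup_{\epsilon\to0}s^{\epsilon,\delta}_i(0)\le\tilde{\J}^{0,\delta}_{0,i}(u_0)=s^{0,\delta}_i(0)$.

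Next I would establish equicoercivity, uniformly in $\epsilon$. Using $(|\nabla v|^2+\epsilon^2)^{p_i/2}\ge|\nabla v|^{p_i}$, H\"older's inequality to absorb the linear term $\int_{D^\delta_i}u_\epsilon f^\epsilon$ (noting $\sup_{0<\epsilon\le1}\|f^\epsilon\|_{p_i'}<\infty$ since $f^\epsilon\to f$ in $L^{p_i'}$), the Poincar\'e inequality in $W^{1,p_i}_0(D^\delta_i)$ applied to $u_\epsilon$ minus a fixed element of $X^\delta_i$ (all elements of $X^\delta_i$ share the same trace on the whole of $\partial D^\delta_i$), and Remark \ref{minmax}, the argument of Lemma \ref{equicoercividad} shows that the bound above forces $\{u_\epsilon\}_{0<\epsilon\le1}$ to be bounded in $W^{1,p_i}(D^\delta_i)$. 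I would then pick a subsequence $u_{\epsilon_n}\rightharpoonup u_*$ weakly in $W^{1,p_i}(D^\delta_i)$; since $X^\delta_i$ is convex and strongly closed, hence weakly closed, $u_*\in X^\delta_i$, and since $D^\delta_i$ is bounded with $p_i^*>2$ the embeddings $W^{1,p_i}(D^\delta_i)\hookrightarrow L^2(D^\delta_i)$ and $W^{1,p_i}(D^\delta_i)\hookrightarrow L^{p_i}(D^\delta_i)$ are compact, so $u_{\epsilon_n}\to u_*$ strongly in $L^2(D^\delta_i)$ and in $L^{p_i}(D^\delta_i)$.

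Then I would pass to the limit. Convexity makes $v\mapsto\int_{D^\delta_i}|\nabla v|^{p_i}$ weakly lower semicontinuous, so $\liminf_n\int_{D^\delta_i}\frac{(|\nabla u_{\epsilon_n}|^2+\epsilon_n^2)^{p_i/2}}{p_i}\,dy\ge\int_{D^\delta_i}\frac{|\nabla u_*|^{p_i}}{p_i}\,dy$; together with $\int\frac{|u_{\epsilon_n}|^2}{2}\to\int\frac{|u_*|^2}{2}$ and $\int u_{\epsilon_n}f^{\epsilon_n}\to\int u_*f$ (product of $L^{p_i}$- and $L^{p_i'}$-strong convergences) this gives $\liminf_n s^{\epsilon_n,\delta}_i(0)\ge\tilde{\J}^{0,\delta}_{0,i}(u_*)\ge s^{0,\delta}_i(0)$. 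With the upper bound this forces $s^{\epsilon_n,\delta}_i(0)\to s^{0,\delta}_i(0)$ and $\tilde{\J}^{0,\delta}_{0,i}(u_*)=s^{0,\delta}_i(0)$, whence $u_*=u_0$ by uniqueness of the minimizer; since the limit does not depend on the subsequence, $u_\epsilon\rightharpoonup u_0$ in $W^{1,p_i}(D^\delta_i)$ and $s^{\epsilon,\delta}_i(0)\to s^{0,\delta}_i(0)$ along the whole family. Finally, for the strong convergence I would combine $s^{\epsilon,\delta}_i(0)\to s^{0,\delta}_i(0)$, the convergence of the lower-order terms, and $(|\nabla u_\epsilon|^2+\epsilon^2)^{p_i/2}\ge|\nabla u_\epsilon|^{p_i}$ to get $\limsup_{\epsilon\to0}\int_{D^\delta_i}|\nabla u_\epsilon|^{p_i}\,dy\le\int_{D^\delta_i}|\nabla u_0|^{p_i}\,dy$, which with weak lower semicontinuity yields $\int_{D^\delta_i}|\nabla u_\epsilon|^{p_i}\,dy\to\int_{D^\delta_i}|\nabla u_0|^{p_i}\,dy$; since $\nabla u_\epsilon\rightharpoonup\nabla u_0$ in the uniformly convex space $L^{p_i}(D^\delta_i)$, the Radon--Riesz property (equivalently, convergence of the modular, see \cite{Diening}) upgrades this to strong convergence of $\nabla u_\epsilon$ in $L^{p_i}(D^\delta_i)$, hence $u_\epsilon\to u_0$ strongly in $W^{1,p_i}(D^\delta_i)$.

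The hard part is really just bookkeeping: keeping every estimate uniform in $\epsilon$. Concretely, the coercivity bound of the second step must have an $\epsilon$-independent Poincar\'e constant and use $\sup_{0<\epsilon\le1}\|f^\epsilon\|_{p_i'}<\infty$, and the convergences of the $L^2$ and linear terms must hold uniformly on bounded sets of $W^{1,p_i}(D^\delta_i)$; apart from that, the argument is the one already carried out for Theorem \ref{teo.continuidad}.
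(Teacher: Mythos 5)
Your argument is correct, and it rests on the same core ingredients as the paper's proof, but it is organized differently. The paper reduces Proposition \ref{convergenciasolu} to a $\Gamma$-convergence claim: it shows that $\J^{\epsilon,\delta}_{0,i}$ $\Gamma$-converges to the limit functional $\J^{\delta}_i$ on the weakly closed set $X^{\delta}_i$ (liminf inequality from weak lower semicontinuity of the convex terms plus the H\"older bound $\int_{D^\delta_i} v^\epsilon(f-f^\epsilon)\le \|v^\epsilon\|_{p_i}\|f-f^\epsilon\|_{p_i'}$ with $f^\epsilon\to f$ in $L^{p_i'}$; limsup inequality from the constant recovery sequence), and then invokes Theorem \ref{convergencia sobre acotados}. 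Your limsup bound with the fixed competitor $u_0$ and your liminf step are precisely these two inequalities; what you do differently is to unpack the abstract theorem and run the direct method by hand, which forces you to make explicit the $\epsilon$-uniform coercivity (a hypothesis of Theorem \ref{convergencia sobre acotados} that the paper's proof leaves implicit; your Poincar\'e--H\"older--Young absorption using $\sup_{0<\epsilon\le 1}\|f^\epsilon\|_{p_i'}<\infty$ is the right way to obtain it, in the spirit of Lemma \ref{equicoercividad}). The main payoff of your version is the final step: the paper's written proof, via $\Gamma$-convergence, only yields convergence of the minima and weak convergence of the minimizers, while the proposition asserts strong $W^{1,p_i}(D^\delta_i)$ convergence; your argument through convergence of the energies, the inequality $(|\nabla u_\epsilon|^2+\epsilon^2)^{p_i/2}\ge|\nabla u_\epsilon|^{p_i}$, and the Radon--Riesz property of the uniformly convex space $L^{p_i}$ (equivalently, convergence of the modulars, as in the proof of Theorem \ref{teo.continuidad}) actually delivers that stronger claim, so your proof is, if anything, more complete than the one in the paper; both versions, of course, share the same implicit use of the compact embeddings of $W^{1,p_i}(D^\delta_i)$ guaranteed by $p_i^*>2$.
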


\begin{proof}
Let us begin by observing that
$$
\J^{\epsilon,\delta}_{0,i}(v):=\int_{D^{\delta}_i}\frac{\big(|\nabla v |^2 
 + \epsilon^2\big)^{\frac{p_i}{2}}}{p_i}\, dy +\int_{D^{\delta}_i} \frac{|v|^2}{2} \, dy -\int_{D^{\delta}_i} v f^{\epsilon}\, dy.
$$
Now let us denote
$$
\J^{\delta}_i(v):=\int_{D^{\delta}_i}\frac{|\nabla v |^2}{p_i}\, dy +\int_{D^{\delta}_i} \frac{|v|^2}{2} \, dy -\int_{D^{\delta}_i} v f\, dy.
$$
Observe that $\J^{\epsilon,\delta}_{0,i}$, $\J^{\delta}_i(v):X^{\delta}_i \to \R$. By Theorem \ref{convergencia sobre acotados}, it is enough to prove that $\J^{\epsilon,\delta}_{0,i}$ $\Gamma$- converges to $\J^{\delta}_i$ in $W^{1,p_i}(D^{\delta}_i)$ for the weak topology.

First, let $v^{\epsilon}\rightharpoonup v$ weakly in $W^{1,p_i}(D^{\delta}_i)$. Let us observe that $v \in X^{\delta}_i$ since $X^{\delta}_i$ is weakly closed. 

Observe that the first and second terms in $\J^{\delta}_{i}$ are convex and strongly continuous, therefore weakly lower semicontinuous. And the third term is linear and continuous, therefore weakly continuous.

Therefore,
$$
\J^{\delta}_{i}(v)\leq \lim \J^{\delta}_{i}(v^{\epsilon})\leq \liminf\J^{\epsilon,\delta}_{0,i}(v^{\epsilon})+\int_{D^{\delta}_i} v^{\epsilon}(f-f^{\epsilon}).
$$ 
Applying H\"older's inequality for the last term above, we have that
$$
\int_{D^{\delta}_i} v^{\epsilon}(f-f^{\epsilon})\leq ||v^{\epsilon}||_{p_i}||f-f^{\epsilon}||_{p'_i}.
$$
Since $||v^{\epsilon}||_{p_i}$ is bounded (because of the weak convergence) and $f^{\epsilon} \rightarrow f$ in $L^{p'_i}$, the last term goes to 0.

Now, taking $\{v^{\epsilon}\}=v$ as recovery sequence, we have that $\J^{\epsilon,\delta}_{0,i}(v)\rightarrow \J^{\epsilon,\delta}(v)$, which completes the proof.
\end{proof}

Performing analogous computations as in the previous section, we can see that $s^{\epsilon,\delta}_i(t)$ is differentiable at $t=0$ and
$$
\frac{ds^{\epsilon,\delta}_i(0)}{dt}=R^{\epsilon,\delta}_{i}(u^{\epsilon, \delta}_i) - \int_{D^{\delta}_i} u^{\epsilon, \delta}_i f \div V\, dy - \int_{D^{\delta}_i} u^{\epsilon, \delta}_i \nabla f^{\epsilon}\cdot V\, dy
$$
where
$$
R^{\epsilon,\delta}_{i}(v):=\int_{D^{\delta}_i}\frac{(|\nabla v|^{2}+\epsilon^{2})^\frac{p_i}{2}}{p_i}\div V-(|\nabla v|^{2}+\epsilon^{2})^{\frac{p_i}{2}-1} \nabla v\cdot\nabla v DV+ \div V\frac{|v|^2}{2}\, dy.
$$

Since the expression of $\frac{ds^{\epsilon,\delta}_{i}(0)}{dt}$ given above only involves first derivatives, we can conclude the following result from Corollary \ref{convergenciasolu}.  
\begin{prop}\label{convergenciaderivada}
$\frac{ds^{\epsilon,\delta}_{i}(0)}{dt}$ converges to $\frac{ds^{0,\delta}_{i}(0)}{dt}$ when $\epsilon \to 0$.
\end{prop}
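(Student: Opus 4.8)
The plan is to pass to the limit $\epsilon\to 0$ directly in the explicit formula
$$
\frac{ds^{\epsilon,\delta}_i(0)}{dt}=R^{\epsilon,\delta}_{i}(u^{\epsilon, \delta}_i) - \int_{D^{\delta}_i} u^{\epsilon, \delta}_i f \div V\, dy - \int_{D^{\delta}_i} u^{\epsilon, \delta}_i \nabla f^{\epsilon}\cdot V\, dy
$$
obtained above, treating the three summands separately and using as the main input the strong convergence $u^{\epsilon,\delta}_i\to u^{\delta}_i$ in $W^{1,p_i}(D^{\delta}_i)$ provided by Proposition \ref{convergenciasolu}. Since $p_i^*>2$, the Sobolev embedding $W^{1,p_i}(D^{\delta}_i)\hookrightarrow L^{2}(D^{\delta}_i)$ is continuous, so $u^{\epsilon,\delta}_i\to u^{\delta}_i$ strongly in $L^2(D^{\delta}_i)$ as well; along a subsequence I may in addition assume that $\nabla u^{\epsilon,\delta}_i\to\nabla u^{\delta}_i$ and $u^{\epsilon,\delta}_i\to u^{\delta}_i$ a.e.\ in $D^{\delta}_i$, dominated by fixed functions $g\in L^{p_i}(D^{\delta}_i)$ and $h\in L^2(D^{\delta}_i)$ respectively.

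The two lower-order terms I would dispatch first. Since $f\div V$ is bounded ($f\in C^1$ and $V$ is Lipschitz with $\sop V\subset\subset\Omega$) and $u^{\epsilon,\delta}_i\to u^{\delta}_i$ in $L^2(D^{\delta}_i)$, the term $\int_{D^{\delta}_i}u^{\epsilon,\delta}_i f\div V\, dy$ converges to $\int_{D^{\delta}_i}u^{\delta}_i f\div V\, dy$. For the last term I would split $\int_{D^{\delta}_i}u^{\epsilon,\delta}_i\nabla f^{\epsilon}\cdot V\, dy$ into $\int_{D^{\delta}_i}u^{\epsilon,\delta}_i(\nabla f^{\epsilon}-\nabla f)\cdot V\, dy+\int_{D^{\delta}_i}u^{\epsilon,\delta}_i\nabla f\cdot V\, dy$; taking $f^{\epsilon}$ to be mollifications of the $C^1$ function $f$ gives $\nabla f^{\epsilon}\to\nabla f$ uniformly on $\overline{D^{\delta}_i}$, hence in $L^{p_i'}(D^{\delta}_i)$, so by Hölder's inequality (Proposition \ref{propholder}) and the boundedness of $\|u^{\epsilon,\delta}_i\|_{p_i}$ the first piece tends to $0$ while the second converges to $\int_{D^{\delta}_i}u^{\delta}_i\nabla f\cdot V\, dy$ as before.

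The core of the argument is the convergence $R^{\epsilon,\delta}_i(u^{\epsilon,\delta}_i)\to R^{0,\delta}_i(u^{\delta}_i)$, which I would obtain by dominated convergence along the subsequence fixed above. The integrand of $R^{\epsilon,\delta}_i(u^{\epsilon,\delta}_i)$ converges a.e.\ to that of $R^{0,\delta}_i(u^{\delta}_i)$: at points where $\nabla u^{\delta}_i\neq 0$ one uses $(|\nabla u^{\epsilon,\delta}_i|^2+\epsilon^2)^{p_i/2-1}\to|\nabla u^{\delta}_i|^{p_i-2}$ (the limit being nonzero, no singularity appears), and where $\nabla u^{\delta}_i=0$ one kills the possibly singular prefactor via $0\le(|\nabla u^{\epsilon,\delta}_i|^2+\epsilon^2)^{p_i/2-1}|\nabla u^{\epsilon,\delta}_i|^2\le(|\nabla u^{\epsilon,\delta}_i|^2+\epsilon^2)^{p_i/2}\to 0$. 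The same inequality, together with $(|\xi|^2+\epsilon^2)^{p_i/2}\le 2^{p_i/2}(|\xi|^{p_i}+1)$ for $\epsilon\le 1$ and $|\nabla v\cdot\nabla v DV|\le\|DV\|_\infty|\nabla v|^2$, yields the uniform $L^1$-bound $C(g^{p_i}+1+h^2)$ for the integrand on the finite-measure set $D^{\delta}_i$. Dominated convergence then gives the claim along the subsequence; since the limit $R^{0,\delta}_i(u^{\delta}_i)$ is independent of the subsequence, the convergence holds for the whole family $\epsilon\to 0$. Summing the three limits gives $\frac{ds^{\epsilon,\delta}_i(0)}{dt}\to\frac{ds^{0,\delta}_i(0)}{dt}$.

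The step I expect to require the most care is the term $(|\nabla v|^2+\epsilon^2)^{p_i/2-1}\nabla v\cdot\nabla vDV$ in $R^{\epsilon,\delta}_i$ in the range $p_i<2$, where the prefactor degenerates as $\nabla v,\epsilon\to 0$; the remedy, as indicated above, is never to estimate the prefactor in isolation but always to keep it paired with the quadratic factor $|\nabla v|^2$ and bound the product by $(|\nabla v|^2+\epsilon^2)^{p_i/2}$, which simultaneously controls the a.e.\ limit and supplies the domination. Everything else is routine bookkeeping for the dominated convergence theorem.
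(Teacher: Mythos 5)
Your argument is correct and takes essentially the same route as the paper, which simply observes that the derivative formula involves only first derivatives of $u^{\epsilon,\delta}_i$ and invokes the strong $W^{1,p_i}(D^{\delta}_i)$ convergence of Proposition \ref{convergenciasolu}; you merely supply the details the paper omits (subsequence plus dominated convergence, and the correct pairing of the degenerate prefactor $(|\nabla v|^2+\epsilon^2)^{p_i/2-1}$ with $|\nabla v|^2$ when $p_i<2$). The one point worth flagging is that your treatment of $\int_{D^{\delta}_i} u^{\epsilon,\delta}_i\,\nabla f^{\epsilon}\cdot V\,dy$ needs some convergence of $\nabla f^{\epsilon}$ (your mollification choice, or alternatively an integration by parts that moves the derivative off $f^{\epsilon}$ and uses only $f^{\epsilon}\to f$ in $L^{p'}$), a hypothesis the paper leaves implicit since it only assumes $f^{\epsilon}\to f$ in $L^{p'}$.
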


Observe that, by Propositions \ref{convergenciasolu} and \ref{convergenciaderivada}, if we find an expression for the shape derivative of the regularized operator, we will have found one for the original operator.

\subsection{Improvement of the formula for the regularized operator.}
Our main concern in this part of our work will be to find a formula for the shape derivative that does not involve second order derivatives. Therefore, we will be able to pass to the limit when $\epsilon$ goes to $0$. And so, by Proposition \ref{convergenciaderivada}, we will have found an expression for the shape derivative of the original operator.

We start with some preliminaries computations in which we will see the need to have $C^{2}$ regularity for our solutions.
Since
\begin{align*}
\div((|\nabla u^{\epsilon}|^{2}+\epsilon^{2})^{\frac{p_i}{2}}\cdot V)&=\frac{p_i}{2}(|\nabla u^{\epsilon}|^{2}+\epsilon^{2})^{\frac{p_i}{2}-1} D(|\nabla u^{\epsilon}|^{2}+\epsilon^{2}) \cdot V+(|\nabla u^{\epsilon}|^{2}+\epsilon^{2})^{\frac{p_i}{2}}\div V\\
&=p_i(|\nabla u^{\epsilon}|^{2}+\epsilon^{2})^{\frac{p_i}{2}-1} \nabla u^{\epsilon} D^{2}u^{\epsilon} \cdot V+(|\nabla u^{\epsilon}|^{2}+\epsilon^{2})^{\frac{p_i}{2}}\div V,
\end{align*}
we have that
$$
\frac{1}{p_i}\int_{D^{\delta}_i}(|\nabla u^{\epsilon}|^{2}+\epsilon^{2})^{\frac{p_i}{2}}\div V=\frac{1}{p_i}\int_{D^{\delta}_i}\div((|\nabla u^{\epsilon}|^{2}+\epsilon^{2})^{\frac{p_i}{2}}V)- \int_{D^{\delta}_i}(|\nabla u^{\epsilon}|^{2}+\epsilon^{2})^{\frac{p_i}{2}-1} \nabla u^{\epsilon} D^{2}u^{\epsilon} \cdot V
$$
Therefore,
$$
\frac{ds^{\epsilon,\delta}_i}{dt}(0)=\frac{1}{p_i}\int_{D^{\delta}_i}\div((|\nabla u^{\epsilon}|^{2}+\epsilon^{2})^{\frac{p_i}{2}}V)- \int_{D^{\delta}_i}(|\nabla u^{\epsilon}|^{2}+\epsilon^{2})^{\frac{p_i}{2}-1} \nabla u^{\epsilon} D^{2}u^{\epsilon} \cdot V
$$
$$
-\int_{D^{\delta}_i}(|\nabla u^{\epsilon}|^{2}+\epsilon^{2})^{\frac{p_i}{2}-1}  \nabla u^{\epsilon} \cdot \nabla u^{\epsilon} DV\, dy+\frac{1}{2}\int_{D^{\delta}_i} \div (|u^{\epsilon}|^2V)\, dy
$$
$$
-\int_{D^{\delta}_i}u^{\epsilon}\nabla u^{\epsilon} \cdot V \, dy-\int_{D^{\delta}_i} u^{\epsilon} f^{\epsilon} \div V\, dy - \int_{D^{\delta}_i} u^{\epsilon} \nabla f^{\epsilon}\cdot V\, dy.
$$
Let us call $\nu^{\delta}_i$ the exterior unit normal vector to $\partial D^{\delta}_i$ and observe that, since $\sop V \subset\subset \Omega$,
$$
\int_{D^{\delta}_i} \div (|u^{\epsilon}|^2V)\, dy=\int_{\Gamma^{\delta}_i} |u^{\epsilon}|^{2}V \cdot \nu^{\delta}_i \, dS.
$$
Since $u^{\epsilon}$ is a weak solution of our equation, for every test function $\varphi$ we have
$$
\int_{D^{\delta}_i}(|\nabla u^{\epsilon}|^{2}+{\epsilon}^2)^{\frac{p_i-2}{2}}\nabla u^{\epsilon}\nabla \varphi+\int_{D^{\delta}_i} u^{\epsilon} \varphi=\int_{D^{\delta}_i} f^{\epsilon} \varphi.
$$
Let us consider $\varphi=\nabla u^{\epsilon} \cdot V$ as a test function. Since $\nabla (\nabla u^{\epsilon} \cdot V)=D^2u^{\epsilon} \cdot V^t+\nabla u^{\epsilon} DV$, we get
$$
\int_{D^{\delta}_i}(|\nabla u^{\epsilon}|^{2}+{\epsilon}^2)^{\frac{p_i-2}{2}}\nabla u^{\epsilon}(D^2u^{\epsilon} \cdot V^t+\nabla u^{\epsilon} DV)=\int_{\Gamma^{\delta}_i}(|\nabla u^{\epsilon}|^{2} +{\epsilon}^2)^{\frac{p_i-2}{2}}\nabla u^{\epsilon} \cdot \eta \nabla u^{\epsilon} \cdot V+\int_{D^{\delta}_i} (f^{\epsilon}-u^{\epsilon}) \nabla u^{\epsilon} \cdot V.
$$
And, since $V$ has compact support in $\Omega$, we arrive at
$$
\int_{\partial D^{\delta}_i}(|\nabla u^{\epsilon}|^{2} +{\epsilon}^2)^{\frac{p_i-2}{2}}\nabla u^{\epsilon} \cdot \eta \nabla u^{\epsilon} \cdot V=\int_{\Gamma^{\delta}_i}(|\nabla u^{\epsilon}|^{2} +{\epsilon}^2)^{\frac{p_i-2}{2}}\nabla u^{\epsilon} \cdot \nu_i^{\delta} \nabla u^{\epsilon} \cdot V.
$$

Therefore, taking into account that $\nabla u^{\epsilon} D^{2} u^{\epsilon} \cdot V =\nabla u^{\epsilon} \cdot D^{2} u^{\epsilon} V^T$, we have that
$$
\frac{ds^{\epsilon, \delta}_i}{dt}(0)= \frac{1}{p_i} \int_{\Gamma_i^\delta}\Big(|\nabla u^{\epsilon}|^2 + \epsilon^2\Big)^\frac{p_i}{2}V\nu_i^\delta- \int_{\Gamma_i^\delta}\Big(|\nabla u^{\epsilon}|^2 + \epsilon^2\Big)^{\frac{p_i}{2}-1} \nabla u^{\epsilon} \nu_i^\delta \nabla u^{\epsilon} V
$$
$$
+\frac{1}{2}\int_{\Gamma_i^\delta}|u^{\epsilon}|^2V \nu_i^\delta- \int_{D_i^\delta} \Big(\underbrace{f^{\epsilon}\nabla u^{\epsilon} \cdot V + u^{\epsilon} f^{\epsilon} \div V + u^{\epsilon}\nabla f^{\epsilon} \cdot V}_{\div(u^{\epsilon}f^{\epsilon}V)}\Big)\, dy.
$$
Again since V has compact support in $\Omega$, we have that
$$
\int_{D^{\delta}_i}\div (u^{\epsilon}f^{\epsilon}V)\, dy=\int_{\partial D^{\delta}_i}u^{\epsilon}f^{\epsilon}V \cdot \nu^{\delta}_i \, dS=\int_{\Gamma^{\delta}_i}u^{\epsilon}f^{\epsilon}V \cdot \nu^{\delta}_i \, dS.
$$
Observe that we arrive at an expression for the shape derivative that does not involve second order derivatives of $u^{\epsilon}$:
$$
\frac{ds^{\epsilon, \delta}_i}{dt}(0)= \frac{1}{p_i} \int_{\Gamma_i^\delta}\Big(|\nabla u^{\epsilon}|^2 + \epsilon^2\Big)^\frac{p_i}{2}V\nu_i^\delta- \int_{\Gamma_i^\delta}\Big(|\nabla u^{\epsilon}|^2 + \epsilon^2\Big)^{\frac{p_i}{2}-1} \nabla u^{\epsilon} \nu_i^\delta \nabla u^{\epsilon} V
$$
$$
+\frac{1}{2}\int_{\Gamma_i^\delta}|u^{\epsilon}|^2V \nu_i^\delta-\int_{\Gamma^{\delta}_i}u^{\epsilon}f^{\epsilon}V \cdot \nu^{\delta}_i \, dS. 
$$

\subsection{Back to the original operator: the limit when $\epsilon$ goes to $0$.}
Now we able to apply Tolksdorf's regularity estimates (see \cite{Tolksdorf}). These estimates give us uniform bounds for $||u^{\epsilon}||_{C^{1,\alpha}}$ so we have $u^{\epsilon}\to u$ in $C^{1}$. And so we can pass to the limit when $\epsilon$ goes to $0$. Therefore,
$$
\frac{ds^{0, \delta}_i}{dt}(0)= \frac{1}{p_i} \int_{\Gamma_i^\delta}|\nabla u|^{p_i}V\nu_i^\delta- \int_{\Gamma_i^\delta}|\nabla u|^{p_i-2} \nabla u \nu_i^\delta \nabla u V+\frac{1}{2}\int_{\Gamma_i^\delta}|u|^2V \nu_i^\delta- \int_{\Gamma^{\delta}_i}ufV \cdot \nu^{\delta}_i \, dS.
$$
In conclusion we arrive at
$$
\frac{ds^{0,\delta}}{dt}(0)=\frac{ds^{0,\delta}_1}{dt}(0)+\frac{ds^{0,\delta}_2}{dt}(0)
$$
$$
=\frac{1}{p_1} \int_{\Gamma_1^\delta}|\nabla u|^{p_1}V\nu_1^\delta+\frac{1}{p_2} \int_{\Gamma_2^\delta}|\nabla u|^{p_2}V\nu_2^\delta- \int_{\Gamma_1^\delta}|\nabla u|^{p_1-2} \nabla u \nu_1^\delta \nabla u V- \int_{\Gamma_2^\delta}|\nabla u|^{p_2-2} \nabla u \nu_2^\delta \nabla u V
$$
$$
+\frac{1}{2}\int_{\Gamma_1^\delta}|u|^2V \nu_1^\delta- \int_{\Gamma^{\delta}_1}ufV \cdot \nu^{\delta}_1 \, dS+\frac{1}{2}\int_{\Gamma_2^\delta}|u|^2V \nu_2^\delta- \int_{\Gamma^{\delta}_2}ufV \cdot \nu^{\delta}_2 \, dS.
$$
Let us now observe that $\nu_1^\delta \rightarrow \nu_1$ and $\nu_2^\delta \rightarrow \nu_2=-\nu_1$ when $\delta \to 0$. Therefore, taking limit when $\delta \to 0$, the last four terms in the expression above vanish and so we have proved the following.

\begin{teo}
Let $\Omega\subset \R^N$ be open and bounded. Let $D_1, D_2\subset \Omega$ be such that \eqref{prop.particion} is satisfied, let $p=p_1\chi_{D_1} + p_2\chi_{D_2}$, where $1<p_1<p_2$ and $\Gamma=\bar{D_1}\cap\bar{D_2}$.

Let $V\colon \R^N\to\R^N$ be a Lipschitz deformation field, such that spt$(V)\subset\subset\Omega$ and let $s(t)$ be defined by \eqref{st}. Then, the following formula for the derivative $s'(0)$ holds:
$$
\frac{ds}{dt}(0)= \int_{\Gamma} \Big[\frac{|\nabla u|^{p}}{p}\Big]V \cdot \nu \, dS- \int_{\Gamma}[|\nabla u|^{p-2}] (\nabla u\cdot \nu) (\nabla u\cdot V)\, dS,
$$
where
$$
\int_\Gamma [f] G\cdot \nu\, dS := \lim_{\delta\to 0} \left(\int_{\Gamma_1^\delta} f G\cdot\nu_1\, dS - \int_{\Gamma_2^\delta} f G\cdot \nu_2\, dS\right). 
$$
\end{teo}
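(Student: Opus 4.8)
The strategy is to assemble the computations carried out in this section. The formula for $s'(0)$ obtained earlier involves $u$, $\nabla u$ and first-order quantities only through bulk integrals over $\Omega$; converting it into a surface integral over $\Gamma$ requires an integration by parts that costs a second derivative of $u$, which is not available since the interface regularity is at best $C^{1,\alpha}$. This forces the two-parameter regularization: one replaces $\Omega$ by $D_1^\delta\cup D_2^\delta$, on each of which $p$ equals the constant $p_i$, and replaces the $p_i$-Laplacian by the nondegenerate operator of \eqref{eqregul}, whose solutions are $C^2_{\mathrm{loc}}(D_i^\delta)\cap C^1(\overline{D_i^\delta})$. Here $u$ denotes the unique extremal of $s(0)$, and by uniqueness its restriction to each $D_i^\delta$ is $u^{0,\delta}_i$. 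Repeating the argument of the differentiability theorem for the functionals $\J^{\epsilon,\delta}_{t,i}$ gives that $s^{\epsilon,\delta}_i(t)$ is differentiable at $t=0$ with an expression for $\frac{d}{dt}s^{\epsilon,\delta}_i(0)$ involving only first derivatives of $u^{\epsilon,\delta}_i$; hence, by Propositions~\ref{convergenciasolu} and~\ref{convergenciaderivada}, it is enough to compute $\frac{d}{dt}s^{0,\delta}_i(0)$ for each fixed $\delta>0$, add the two contributions, and let $\delta\to 0$.

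The core step is the integration by parts for $\epsilon>0$. Writing $u^{\epsilon}:=u^{\epsilon,\delta}_i$, I would expand $\div\big((|\nabla u^{\epsilon}|^2+\epsilon^2)^{p_i/2}V\big)$, which --- using $\sop V\subset\subset\Omega$, so that $V$ vanishes near $\partial\Omega\cap\overline{D_i^\delta}$ --- converts the term $\tfrac{1}{p_i}\int_{D^\delta_i}(|\nabla u^{\epsilon}|^2+\epsilon^2)^{p_i/2}\div V$ into a boundary integral over $\Gamma^\delta_i$ minus $\int_{D^\delta_i}(|\nabla u^{\epsilon}|^2+\epsilon^2)^{p_i/2-1}\nabla u^{\epsilon}D^2u^{\epsilon}\cdot V$. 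Then I would test the weak formulation of \eqref{eqregul} with $\varphi:=\nabla u^{\epsilon}\cdot V$ --- admissible precisely because $u^{\epsilon}\in C^2_{\mathrm{loc}}(D^\delta_i)\cap C^1(\overline{D^\delta_i})$ and $V$ has compact support --- and, since $\nabla(\nabla u^{\epsilon}\cdot V)=D^2u^{\epsilon}\cdot V^t+\nabla u^{\epsilon}DV$, this cancels the Hessian term exactly and pairs every remaining interior first-order term with a boundary integral over $\Gamma^\delta_i$. Grouping the leftover bulk terms as $\div(u^{\epsilon}f^{\epsilon}V)$ and integrating, all volume integrals disappear and one is left with the expression for $\frac{d}{dt}s^{\epsilon,\delta}_i(0)$ displayed above, a sum of integrals over $\Gamma^\delta_i$ alone.

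It then remains to pass to the two limits. Tolksdorf's estimates \cite{Tolksdorf} apply on $D^\delta_i$, where the exponent is constant, and give uniform $C^{1,\alpha}$ bounds, so $u^{\epsilon}\to u$ in $C^1(\overline{D^\delta_i})$ and $f^{\epsilon}\to f$; each surface integral over $\Gamma^\delta_i$ converges, producing the formula for $\frac{d}{dt}s^{0,\delta}_i(0)$. Summing over $i=1,2$ and letting $\delta\to 0$ one has $\Gamma^\delta_i\to\Gamma$, $\nu^\delta_1\to\nu$ and $\nu^\delta_2\to-\nu$; since $u$ and $f$ are continuous across $\Gamma$ (see \cite{Acerbi-Fusco}), the two terms $\tfrac12\int_{\Gamma^\delta_i}|u|^2V\cdot\nu^\delta_i$ cancel in the limit, and likewise the two terms $\int_{\Gamma^\delta_i}ufV\cdot\nu^\delta_i$, whereas the gradient terms assemble into $\int_\Gamma[\,|\nabla u|^{p}/p\,]\,V\cdot\nu\,dS$ and $\int_\Gamma[\,|\nabla u|^{p-2}\,](\nabla u\cdot\nu)(\nabla u\cdot V)\,dS$, which is the asserted formula. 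I expect the main obstacle to be the rigorous justification of the integration by parts: it requires $C^2$ interior regularity --- hence the $\epsilon$-regularization --- together with $C^1$ regularity up to $\Gamma^\delta_i$ so that $\nabla u^{\epsilon}\cdot V$ is an admissible test function and every boundary term makes sense; a secondary issue is that the cancellation of the lower-order boundary terms as $\delta\to 0$ genuinely uses the continuity of the extremal across the interface.
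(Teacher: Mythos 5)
Your proposal follows essentially the same route as the paper: the $\delta$-domain and $\epsilon$-operator regularizations, the divergence identity combined with testing the weak form of \eqref{eqregul} by $\nabla u^{\epsilon}\cdot V$ to eliminate the Hessian and all volume terms, the passage $\epsilon\to 0$ via Tolksdorf's uniform $C^{1,\alpha}$ estimates, and finally $\delta\to 0$ using $\nu_2^\delta\to-\nu_1$ so that the lower-order boundary terms cancel. The argument is correct and matches the paper's proof, including the role of Propositions~\ref{convergenciasolu} and~\ref{convergenciaderivada}.
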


\appendix 

\section{Gamma convergence results}

In this appendix we will recall some basic concepts of $\Gamma-$convergence that are needed in the present paper. Although these results are well-known, we decide to include this appendix in order to make the paper self contained. Also, the results presented here are not stated in the most general form, but in a for that will be enough for our work. For a complete presentation of the theory of $\Gamma-$convergence, see the book of Dal Maso \cite{DalMaso}.

Let $\psi_n$ and $\psi$ defined in a topological space $X_\tau$ with $T^{2}$ topology. For our applications, $X_\tau$ will be a Banach space and we will consider the weak topology. Then, a family of functionals $\psi_n$ $\Gamma$-converges to $\psi$ if
\begin{itemize}
	\item (liminf inequality) $x_n \to_{\tau} x$ implies that $\psi(x)\leq \liminf \psi_n(x_n)$
and
  \item (limsup inequality) there exists $y_n \rightharpoonup x$ such that $\psi(x)\geq \limsup \psi_n(y_n)$.
\end{itemize}

\begin{teo}\label{convergencia sobre acotados}
Let $X$ be a Banach space, $C \subset X$ closed and convex. Let $\psi_n, \psi\colon C\to [-\infty,\infty]$ be weakly lower semicontinuous, strictly convex and uniformly coercive functionals (i.e. for every $\lambda$, the set $\{x \in C : \psi_n(x)\leq \lambda\}\subset B_r$ for every $n$), then $\inf_{C}\psi_n=\min_{C}\psi_n \rightarrow \inf_{C}\psi=\min_{C}\psi$.

And, if $x_n \in C$ is such that $\psi_n(x_n)=\min_{C}\psi_n$, then $(x_n)$ es precompact and $\psi(x_0)=\min_{C}\psi$ where $x_0=\lim x_n$.

\end{teo}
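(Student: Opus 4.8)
The plan is to run the classical ``fundamental theorem of $\Gamma$-convergence'' in the particularly clean setting at hand, where every functional is strictly convex and \emph{uniformly} coercive, so that minimizers are unique and are automatically confined to a fixed bounded set. Throughout I would use the standing assumption — implicit in the definition stated just above — that $\psi_n\to\psi$ in the $\Gamma$-sense for the weak topology of $X$, together with two elementary facts: $C$, being closed and convex, is weakly closed; and bounded subsets of $X$ are weakly sequentially precompact (which holds because $X$ is reflexive in every application of the theorem in this paper).

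First I would establish existence and uniqueness of minimizers. Each $\psi_n$ is weakly lower semicontinuous on the weakly closed set $C$ and has bounded, hence weakly precompact, sublevel sets, so the direct method yields a minimizer, which is unique by strict convexity; the same applies to $\psi$, giving a unique $x_0\in C$ with $\psi(x_0)=\min_C\psi=:m$ (I assume $m$ finite; the degenerate case $m=-\infty$ follows from the same estimates). This already legitimizes writing $\inf=\min$. Next I would prove the ``easy half'' $\limsup_n\min_C\psi_n\le m$ via the recovery-sequence (limsup) inequality at $x_0$: there is $y_n\rightharpoonup x_0$ in $C$ with $\limsup_n\psi_n(y_n)\le\psi(x_0)=m$, and $\min_C\psi_n\le\psi_n(y_n)$.

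The key structural point comes next: fixing any $\lambda_0>m$, the minimizers satisfy $\psi_n(x_n)=\min_C\psi_n\le\lambda_0$ for all large $n$, so \emph{uniform} coercivity places them in a fixed ball $B_{r(\lambda_0)}$; hence $(x_n)$ is bounded and weakly precompact. For any weakly convergent subsequence $x_{n_k}\rightharpoonup z$ (with $z\in C$), the liminf inequality gives $\psi(z)\le\liminf_k\psi_{n_k}(x_{n_k})=\liminf_k\min_C\psi_{n_k}$, and since $\psi(z)\ge m$ a subsequence-of-subsequence argument over the bounded sequence $(x_n)$ yields $\liminf_n\min_C\psi_n\ge m$; combined with the limsup bound this gives $\min_C\psi_n\to m$, the first assertion. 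The displayed inequality then forces $\psi(z)=m$, so $z=x_0$ by uniqueness, and since every weakly convergent subsequence of the bounded sequence $(x_n)$ has the same limit $x_0$, the whole sequence converges weakly to $x_0$; this is exactly the precompactness-and-identification statement.

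The only place requiring genuine care — the ``main obstacle'' — is upgrading convergence along subsequences to convergence of the full sequences, both for the numbers $\min_C\psi_n$ and for the minimizers $x_n$; this is where one must combine the uniform-coercivity bound with uniqueness of the minimizer of $\psi$ through the subsequence-of-subsequence device. A secondary point worth flagging is that the hypothesis ``$X$ Banach'' should really read ``$X$ reflexive'' for the weak sequential compactness used above; this costs nothing in the applications.
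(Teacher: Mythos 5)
Your proof is correct and follows essentially the same route as the paper's: the limsup (recovery-sequence) inequality bounds $\limsup_n\min_C\psi_n$ by $\inf_C\psi$, uniform coercivity then traps the minimizers $x_n$ in a fixed ball, and the liminf inequality together with weak closedness of $C$ identifies the weak limit as a minimizer of $\psi$ and closes the chain of inequalities. Your two refinements --- the explicit subsequence-of-subsequence step using uniqueness of the minimizer of $\psi$ to get convergence of the \emph{full} sequence $(x_n)$, and the observation that ``Banach'' should really be ``reflexive'' for the weak sequential compactness being invoked --- are both correct and are points the paper's proof passes over silently.
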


\begin{proof}
Let us start by observing that, since $\psi_n$ weakly lower semicontinuous, strictly convex and uniformly coercive functionals, for every $n$ there is a unique $x_n$ such that $\psi_n(x_n)=\inf_{C}\psi_n$ and $(x_n)$ is bounded if $\psi_n(x_n)$ is bounded.
Let us consider now the following recovery function: $x \in C$ such that $y_n \rightharpoonup x$. Therefore,
$$
\psi_n(x_n)=\inf_{C}\psi_n\leq \psi_n(y_n).
$$
And so for every $x$ we have that
$$
\limsup \psi_n(x_n)\leq \limsup \psi_n(y_n)\leq \psi(x).
$$
Therefore,
$$
\limsup \psi_n(x_n)\leq \inf_{C}\psi<\infty
$$
and we can conclude that $x_n \in \{x \in C : \psi_n(x)\leq \lambda\}\subset B_r$ for every $n\geq n_0$ taking $\lambda=\inf_{C}\psi+1$. So $(x_n)$ is bounded and, via subsequences if necessary, $x_n\rightharpoonup x_0 \in C$ (remember that $C$ is convex and closed, therefore weakly closed)).

Finally, observing that
$$
\inf_{C}\psi\leq \psi(x_0)\leq \liminf \psi_n(x_n)\leq \liminf (\inf_{C}\psi_n), 
$$
the proof is completed.
\end{proof}

\begin{remark}\label{obs gamma convergencia}
If $\psi_n \rightarrow \psi$ point-wise, the inequality of the inferior limit (it is enough to take $y_n$ equal to $x$ for every $n$) always holds. Therefore, to obtain the convergence of the functionals it would only be necessary to check the superior limit inequality.  
\end{remark}

\section*{Acknowledgements}

This paper is partially supported by grants UBACyT 20020130100283BA, CONICET PIP 11220150100032CO and ANPCyT PICT 2012-0153. 

C. Baroncini is a doctoral fellow of CONICET and J. Fern\'andez Bonder is a member of CONICET.

\def\ocirc#1{\ifmmode\setbox0=\hbox{$#1$}\dimen0=\ht0 \advance\dimen0
  by1pt\rlap{\hbox to\wd0{\hss\raise\dimen0
  \hbox{\hskip.2em$\scriptscriptstyle\circ$}\hss}}#1\else {\accent"17 #1}\fi}
  \def\ocirc#1{\ifmmode\setbox0=\hbox{$#1$}\dimen0=\ht0 \advance\dimen0
  by1pt\rlap{\hbox to\wd0{\hss\raise\dimen0
  \hbox{\hskip.2em$\scriptscriptstyle\circ$}\hss}}#1\else {\accent"17 #1}\fi}
\providecommand{\bysame}{\leavevmode\hbox to3em{\hrulefill}\thinspace}
\providecommand{\MR}{\relax\ifhmode\unskip\space\fi MR }
\providecommand{\MRhref}[2]{%
  \href{http://www.ams.org/mathscinet-getitem?mr=#1}{#2}
}
\providecommand{\href}[2]{#2}
\bibliographystyle{plain}
\bibliography{biblio}

\end{document}